\documentclass[11pt]{amsart}
\usepackage{amsthm}
\usepackage{amsmath, mathrsfs}
\usepackage{amssymb}
\usepackage{hyperref}
\usepackage{enumerate}
\usepackage{pgfplots}
\pgfplotsset{compat=1.18}
\usepackage{float}

\usepackage{latexsym,array}
\usepackage{amsfonts}
\usepackage{shadow}
\usepackage{tikz, tikz-cd}
\tikzset{
	commutative diagrams/.cd,
	arrow style=tikz,
	diagrams={>=latex}}
\usetikzlibrary{matrix, arrows, positioning, shapes, shapes.multipart}
\usepackage{rotating}
\usepackage{tabularx}
\usepackage[english]{babel}
\usepackage{graphicx}
\usepackage[section]{placeins}

\tikzstyle{decision} = [diamond, draw, fill=gray!15, 
    text width=5em, text badly centered, node distance=3cm, inner sep=0pt]
\tikzstyle{block} = [rectangle, draw, fill=gray!15, 
    text width=10.5em, text centered, rounded corners, minimum height=4em]
    \tikzstyle{blueblock} = [rectangle, draw, fill=gray!10!blue!25, 
    text width=10.5em, text centered, rounded corners, minimum height=4em]
\tikzstyle{line} = [draw, -latex']
\tikzstyle{cloud} = [draw, ellipse,fill=gray!40, node distance=3cm, %text width=6em, text badly centered,
    minimum height=4em]
\newcommand{\U}{\mathcal{U}}

\newcommand{\C}{\mathbb{C}}

\newcommand{\Z}{\mathbb{Z}}
\newcommand{\T}{\mathbb{T}}

\newcommand{\Wind}{{\rm{Wind}}}
\newcommand{\Tr}{{\rm{Tr}}}

\newtheorem{Pa}{Paper}[section]
\newtheorem{Tm}[Pa]{{\bf Theorem}}
\newtheorem{mainTm}{Theorem}

\newtheorem{mainCy}[mainTm]{Corollary}
\newtheorem*{Wg}{Warning}
\newtheorem*{Qn}{Question}

\newtheorem{La}[Pa]{{\bf Lemma}}

\newtheorem{Cy}[Pa]{{\bf Corollary}}

\newtheorem{Pn}[Pa]{{\bf Proposition}}

\tikzset{node distance=2cm, auto}

\date{}

\title[$\Z^2$ is flexibly stable]{$\Z^2$ is flexibly stable in the operator norm}
\author{Alexander Lubotzky}
\address{Department of Mathematics\\ Weizmann Institute of Science\\Rehovot \\ Israel}
\email{alex.lubotzky@mail.huji.ac.il}

\author{Guy Salomon}
\address{School of Mathematical Sciences\\ Holon Institute of Technology \\ Holon \\ Israel}
\email{guysal@hit.ac.il}

\subjclass[2020]{46L05, 47A55, 20F69}
\begin{document}
\maketitle
\begin{abstract}
A cornerstone of stability theory is Voiculescu's 1983 counterexample: he constructed a sequence of pairs of unitary matrices whose commutators converge to zero in the operator norm, but whose distances from the set of commuting unitary pairs remain bounded away from zero. Namely, the group $\mathbb{Z}^2$ is not stable in the operator norm. We prove, somewhat surprisingly, that stability is restored after an asymptotically negligible enlargement of the dimension. That is, the group $\mathbb{Z}^2$ is flexibly stable in the operator norm. This provides the first example, in any context, of a flexibly stable group that is not stable.

Building on a construction of Eckhardt, who produced finitely generated amenable groups that are very-flexibly stable but not flexibly stable in the normalized Hilbert--Schmidt norm, we show that the same groups exhibit the analogous separation in the operator norm: they are very-flexibly stable but not flexibly stable.
\end{abstract}
%\tableofcontents

% %

\section{Introduction}\label{Sec:intro}

A classical problem in operator theory, originating in mathematical physics, asks whether almost commuting unitary matrices must be close to commuting unitary matrices. In the operator norm, this question was answered negatively by Voiculescu \cite{Voiculescu83}, who constructed a sequence of pairs of unitary matrices whose commutators converge to zero, but whose distance from the set of commuting unitary pairs remains bounded away from zero. Since then, Voiculescu's example has become one of the cornerstones of the theory of almost commuting operators and has inspired a large body of work \cite{ExelLoring89,GongLin98,EilLorPed99,GerholdShalit24,DorOnHallKachkovskiy25}.

The purpose of this paper is to show that Voiculescu's obstruction disappears once one allows an asymptotically negligible increase in dimension. More precisely, denote by $\U_{n}$ the group of $n \times n$ unitary matrices, let $u_n,v_n\in \U_{d_n}$ and assume that 
\[
\|u_nv_n-v_nu_n\|\longrightarrow 0.
\]
We prove that there exist integers
\[
k_n=d_n+o(d_n),
\]
isometries
\[
W_n: \mathbb{C}^{d_n}\to \mathbb{C}^{k_n},
\]
and commuting unitaries $U_n,V_n\in \U_{k_n}$ such that
\[
\|W_n^*U_nW_n-u_n\|\longrightarrow 0
\qquad\text{and}\qquad
\|W_n^*V_nW_n-v_n\|\longrightarrow 0.
\]
Equivalently, in the language of group stability, we prove the following.

\begin{mainTm}[Theorem \ref{Tm:Z2_is_FS}]\label{mainTmA}
The group $\mathbb{Z}^2$ is flexibly stable in the operator norm.
\end{mainTm}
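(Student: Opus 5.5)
The plan is to reduce flexible stability to a statement about dilations, and to kill the single known obstruction — the Bott/winding invariant — by adding a small extra block.

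\textbf{Setup and the obstruction.} Let $u_n,v_n\in\U_{d_n}$ with $\|[u_n,v_n]\|\to 0$. The obstruction to genuine stability is the Exel--Loring/Bott invariant. When $\|[u,v]\|$ is small, it can be read off as the winding number $\Wind$ of the loop
\[
t\mapsto \det\bigl((1-t)uv+t\,vu\bigr),
\]
equivalently as $\frac{1}{2\pi i}\Tr\log(vuv^*u^*)$. A pair has vanishing invariant exactly when it is near a commuting pair. In the form I will assume, this is Lin's theorem: almost commuting unitaries with small commutator and vanishing Bott index are uniformly close to commuting ones, with a bound independent of dimension. So the goal is to embed $(u_n,v_n)$ into a slightly larger almost commuting pair whose Bott invariant vanishes, while still compressing back to $(u_n,v_n)$.

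\textbf{Key steps, in order.}

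First, set $\beta_n=\mathrm{Bott}(u_n,v_n)\in\Z$. The invariant satisfies $|\beta_n|$ bounded by roughly $d_n\|[u_n,v_n]\|$, so $|\beta_n|=o(d_n)$. This estimate is what makes the extra dimension negligible.

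Second, build an auxiliary pair $(a_n,b_n)$ on $\C^{m_n}$ with $m_n=o(d_n)$, $\|[a_n,b_n]\|\to0$, and Bott invariant $-\beta_n$. The natural choice is a direct sum of $|\beta_n|$ Voiculescu-type blocks, each the clock-and-shift pair $(\mathrm{diag}(\omega^j),\text{cyclic shift})$ of size $N$ with $\omega=e^{\pm 2\pi i/N}$. Each block has commutator of norm $|1-\omega|\approx 2\pi/N$ and Bott index $\pm1$. To make the total size $|\beta_n| N$ equal to $o(d_n)$ while $N\to\infty$, take
\[
N=N_n\to\infty \quad\text{with}\quad |\beta_n|N_n=o(d_n).
\]
This is possible when $|\beta_n|=o(d_n)$.

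Third, form $U'_n=u_n\oplus a_n$ and $V'_n=v_n\oplus b_n$ on $\C^{d_n+m_n}$. This pair has commutator tending to $0$ and Bott index $0$ by additivity. Lin's theorem then gives commuting unitaries $U_n,V_n$ with $\|U_n-U'_n\|\to0$ and $\|V_n-V'_n\|\to0$. Take $W_n$ to be the inclusion of the first summand. Then $W_n^*U_nW_n$ is close to $u_n$, and similarly for $V_n$, with $k_n=d_n+m_n$.

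\textbf{Main obstacle.} The delicate point is the quantitative bound $|\beta_n|=o(d_n)$ together with a uniform, dimension-free version of Lin's theorem under vanishing index. One cannot simply take blocks of size $1$: a single block with index $\pm1$ needs a commutator of norm about $2\pi/N$, so it cannot be small without size. I would first try to prove $|\mathrm{Bott}|\le C d\,\|[u,v]\|$ from the formula
\[
\mathrm{Bott}=\frac{1}{2\pi}\Tr\,\mathrm{Im}\log\bigl(vuv^*u^*\bigr).
\]
Each eigenvalue of $vuv^*u^*$ is within $\|[u,v]\|$ of $1$, so each eigenvalue contributes at most $O(\|[u,v]\|)$ to the trace, giving the bound.

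I should check that this trace formula really computes the Exel--Loring invariant. If the formula is instead an identity that is always zero (because the determinant of a commutator is $1$), then the correct invariant must be the Exel--Loring winding defined via the path above. In that case I would estimate its winding by a similar eigenvalue-perturbation count along the path, which still yields $O(d\|[u,v]\|)$.

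If a uniform Lin theorem is not available, I would fall back on the following: approximate $(u_n,v_n)$ by a pair built from a Voiculescu-type normal form, for instance via a Berg-type technique. Then directly absorb the winding with the extra blocks, making the commuting dilation explicit.
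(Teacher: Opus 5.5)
Your proposal is correct and follows the paper's proof essentially step for step. The shared steps are: the bound $|w(u_n,v_n)|\le \frac{C}{2\pi}\,d_n\|[u_n,v_n]\|$ obtained from $\frac{1}{2\pi i}\Tr\log(vuv^*u^*)$; padding with $|w_n|$ Voiculescu clock--shift blocks of size $m_n\to\infty$ with $|w_n|m_n=o(d_n)$ and the appropriate orientation; additivity of the invariant; the Gong--Lin / Eilers--Loring--Pedersen theorem, which is the unitary result you call ``Lin's theorem''; and compression to the first summand. Your worry about the trace formula is unfounded: $\det(vuv^*u^*)=1$ only forces $\Tr\log(vuv^*u^*)\in 2\pi i\mathbb Z$, not zero, and the paper (citing Exel) confirms that this trace formula equals the winding number.
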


Flexible stability was introduced by Becker and Lubotzky
\cite{BeckerLubotzky20} as a weakening of the usual notion of stability,
in which almost representations may be corrected after a negligible enlargement of the underlying space.
While this notion seems to have some theoretical value (cf. \cite{BowenBurton20,Dogon23,ChapmanLubotzky25I,ChapmanLubotzky25II}), it has been unknown whether this additional flexibility
genuinely enlarges the class of stable groups.

There are several reasons why a separation between these notions is not automatic. For amenable groups, Eckhardt and Shulman proved that flexible stability coincides with stability in the normalized Hilbert--Schmidt norm \cite[Proposition~6.2]{EckhardtShulman23}, and the analogous statement is known for permutation stability \cite[Lemma~3.2]{Ioana20}. Relatedly, Lazarovich--Levit--Minsky proved that surface groups are flexibly stable with respect to permutations, although their permutation stability remains open \cite{LazarovichLevitMinsky25}. To the best of our knowledge, prior to the present work, no example, in any context, was known in which flexible stability was proved to be strictly weaker than stability.

Since Voiculescu's example shows that $\mathbb{Z}^2$ is not stable in the
operator norm, our main theorem yields the first such separation.

\begin{mainCy}[Corollary \ref{Cy:FS_is_not_S}]
Flexible stability and stability are not equivalent in the operator norm.
Indeed, the group $\mathbb{Z}^2$ is flexibly stable but not stable.
\end{mainCy}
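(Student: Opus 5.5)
The Corollary follows by combining Theorem~\ref{mainTmA} with Voiculescu's counterexample \cite{Voiculescu83}; the only work is to check that both fit the definitions as stated. First I will note the trivial direction: stability in the operator norm implies flexible stability. Indeed, if every almost-commuting sequence $u_n,v_n\in\U_{d_n}$ is close to commuting unitaries $U_n,V_n\in\U_{d_n}$, then one may take $k_n=d_n$ and $W_n=\mathrm{id}_{\C^{d_n}}$, so $W_n^*U_nW_n=U_n$ and $W_n^*V_nW_n=V_n$. Hence the only question is whether the converse fails, and it suffices to exhibit one group that is flexibly stable but not stable.

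For $\Z^2$, flexible stability is exactly Theorem~\ref{mainTmA} (Theorem~\ref{Tm:Z2_is_FS}), which I take as given. For non-stability, I will recall Voiculescu's construction. It gives unitaries $u_n,v_n\in\U_{n}$ (built from the cyclic shift and the diagonal matrix of $n$-th roots of unity) such that $\|u_nv_n-v_nu_n\|\to0$, while there is $\varepsilon>0$ with $\max(\|u_n-U\|,\|v_n-V\|)\ge\varepsilon$ for every commuting pair $U,V\in\U_n$ and all $n$. Since $\Z^2=\langle a,b\mid [a,b]\rangle$, an asymptotic representation in the operator norm is precisely such an almost-commuting sequence. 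Being close to a genuine representation of the same dimension means being close to a commuting pair in $\U_{d_n}$. Voiculescu's sequence therefore witnesses that $\Z^2$ is not stable.

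I expect no real obstacle. The only point needing care is matching conventions: stability is phrased in terms of the fixed generating pair, and the distance is taken in the same dimension $d_n$. If the paper's definition quantifies over all finite generating sets, I will invoke the standard fact that stability and flexible stability are independent of the choice of finite presentation. Combining the two facts gives that $\Z^2$ is flexibly stable but not stable, so the two notions are not equivalent in the operator norm.
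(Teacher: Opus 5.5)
Your proposal is correct and matches the paper's argument: the corollary is simply Theorem~\ref{Tm:Z2_is_FS} combined with Voiculescu's non-stability example (the pairs $(S_n,\Omega_n)$). The paper also passes implicitly, as you do, between asymptotic representations of all of $\mathbb{Z}^2$ and almost-commuting generator pairs.
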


We now recall the relevant definitions. We begin with the general framework,
which allows us to compare unitary stability with permutation stability. Let
\[
\mathcal{G}=(G_n,\rho_n)_{n\in\mathbb{N}}
\]
be a sequence of groups $G_n$ equipped with bi-invariant metrics $\rho_n$, and let $\Gamma$
be a countable discrete group. A \emph{$\mathcal{G}$-challenge} for $\Gamma$
is a sequence of maps
\[
\varphi_n\colon \Gamma\to G_n
\]
such that
\[
\rho_n\bigl(\varphi_n(gh),\varphi_n(g)\varphi_n(h)\bigr)
\longrightarrow 0
\qquad
\text{for every }g,h\in\Gamma.
\]
The group $\Gamma$ is said to be \emph{$\mathcal{G}$-stable} if, for every such challenge, there exist homomorphisms
\[
\pi_n\colon \Gamma\to G_n
\]
such that
\[
\rho_n\bigl(\varphi_n(g),\pi_n(g)\bigr)
\longrightarrow 0
\qquad
\text{for every }g\in\Gamma.
\]

Taking $G_n={\rm{Sym}}(d_n)$, equipped with the normalized Hamming metric
\[
d_{\mathrm{Ham}}(\sigma,\tau)
=
\frac{1}{d_n}
\bigl|\{j\in\{1,\ldots,d_n\}:\sigma(j)\neq\tau(j)\}\bigr|,
\]
gives \emph{permutation stability}. Taking $G_n=\U_{d_n}$, equipped with a metric induced by some matrix norm, gives \emph{unitary stability}, a notion which depends strongly on the chosen matrix norm.

Flexible and very-flexible stability require a way of comparing representations of different dimensions. We therefore now specialize to unitary groups equipped with normalized unitarily invariant matrix norms, which is the setting used throughout the paper. Parallel notions can also be defined for permutations. Our main results concern the operator norm, although several comparisons with the normalized Hilbert--Schmidt norm will be useful.

For $n\in\mathbb{N}$, let $M_n:=M_n(\C)$ be the C*-algebra of $n \times n$ complex matrices and let
\[
\bigl(\|\cdot\|_{*,n}\bigr)_{n\in\mathbb{N}}
\]
be a family of normalized unitarily invariant matrix norms, so that
\[
\|1_n\|_{*,n}=1
\qquad\text{and}\qquad
\|uav\|_{*,n}=\|a\|_{*,n}
\]
for every $a\in M_n$ and every pair of unitaries $u,v\in \U_n$. We usually suppress the
dimension from the notation and write simply $\|\cdot\|_*$. Throughout the
paper, unless otherwise stated, $\|\cdot\|$ denotes the operator norm. We will
also refer to the normalized Hilbert--Schmidt norm, defined by
\[
\|a\|_2
=
\left(\frac{1}{n}\operatorname{Tr}(a^*a)\right)^{1/2},
\qquad a\in M_n.
\]

In this setting, a challenge, also known as an \emph{asymptotic representation} of $\Gamma$ with respect to
$\|\cdot\|_*$, is just a sequence of maps
\[
\varphi_n\colon \Gamma\to \U_{d_n}
\]
such that
\[
\|\varphi_n(gh)-\varphi_n(g)\varphi_n(h)\|_*
\longrightarrow 0
\qquad
\text{for every }g,h\in\Gamma.
\]

We say that $\Gamma$ is \emph{stable}, \emph{flexibly stable}, or
\emph{very-flexibly stable} with respect to $\|\cdot\|_*$ if, for every
asymptotic representation $(\varphi_n)$, there exist integers $k_n\geq d_n$,
isometries
$
W_n\colon \mathbb{C}^{d_n}\to \mathbb{C}^{k_n},
$
and genuine unitary representations
\[
\pi_n\colon \Gamma\to \U_{k_n}
\]
such that
\[
\|W_n^*\pi_n(g)W_n-\varphi_n(g)\|_*
\longrightarrow 0
\qquad
\text{for every }g\in\Gamma,
\]
subject to the following dimension constraints:
\[
\begin{array}{c|c}
\text{notion} & \text{dimension constraint}\\
\hline
\text{stability} & k_n=d_n,\\[1mm]
\text{flexible stability} & k_n/d_n\longrightarrow 1,\\[1mm]
\text{very-flexible stability} & \text{no restriction on }k_n/d_n.
\end{array}
\]
Thus, stability requires a correction in the original dimension, flexible
stability permits only an asymptotically negligible enlargement, and
very-flexible stability permits an arbitrary finite-dimensional enlargement.

The same definitions extend verbatim to unital C*-algebras by replacing
group representations with unital *-representations. Equivalently, a group
$\Gamma$ is stable (respectively, flexibly stable or very-flexibly stable) if
and only if its full group C*-algebra $C^*(\Gamma)$ enjoys the corresponding
property.

A broad source of very-flexibly stable C*-algebras is provided by residual
finite-dimensionality together with lifting properties. Eckhardt and Shulman
proved that every residually finite-dimensional C*-algebra with the lifting
property is very-flexibly stable in the normalized Hilbert--Schmidt norm
\cite[Theorem~6.4]{EckhardtShulman23}. More recently, Fournier-Facio and
Willett extended this result to arbitrary normalized unitarily invariant
matrix norms: every separable unital C*-algebra with the local lifting
property and residual finite-dimensionality is very-flexibly stable
\cite{FournierFacioWillett26}.

Consequently, every nuclear residually finite-dimensional unital
C*-algebra is very-flexibly stable. In particular, if $\Gamma$ is amenable
and residually finite, then $C^*(\Gamma)$ is nuclear and residually
finite-dimensional, and hence $\Gamma$ is very-flexibly stable with respect
to every normalized unitarily invariant matrix norm.

Since $C^*(\mathbb{Z}^2)\cong C(\mathbb{T}^2)$
is nuclear and residually finite-dimensional, the general criterion already
shows that $\mathbb{Z}^2$ is very-flexibly stable with respect to every
normalized unitarily invariant matrix norm. The content of our first main
theorem is that, in the operator norm, the arbitrary enlargement permitted by
very-flexible stability can always be replaced by an asymptotically negligible
one.

Our second main result shows that this improvement is not available for every
very-flexibly stable group. Building on a construction of Eckhardt
\cite{Eckhardt25}, we show the following.

\begin{mainTm}[Corollary \ref{Cy:VFS_is_not_FS}]\label{mainTmC}
There exist finitely generated groups that are very-flexibly stable but not
flexibly stable, in the operator norm.
\end{mainTm}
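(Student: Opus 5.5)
The plan is to use Eckhardt's groups \cite{Eckhardt25}: finitely generated amenable, residually finite groups $\Gamma$ that are very-flexibly stable but not flexibly stable in the normalized Hilbert--Schmidt norm. First, very-flexible stability in the operator norm follows at once from the criterion recalled above. Since $\Gamma$ is amenable and residually finite, $C^*(\Gamma)$ is nuclear and residually finite-dimensional. The theorem of Fournier-Facio and Willett \cite{FournierFacioWillett26} then gives very-flexible stability with respect to every normalized unitarily invariant norm, in particular the operator norm. So the whole content is the failure of flexible stability in the operator norm.

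For the negative part, I would not try to transfer from the Hilbert--Schmidt statement, since operator-norm asymptotic representations are Hilbert--Schmidt ones but the implication goes the wrong way. Instead I would reuse the mechanism behind Eckhardt's example. My expectation is that it rests on a ``dimension/trace rigidity'': there is a sequence of almost representations $\varphi_n\colon\Gamma\to\U_{d_n}$, coming from finite quotients or from an approximating sequence, and a trace or index type invariant. Examples would be the normalized trace of a spectral projection of a self-adjoint element of $\C[\Gamma]$, or a $K$-theoretic quantity. The invariant takes values in a constrained set, say integer multiples of $1/m$ with $m\nmid d_n$, for genuine representations, but is forced to be far from that set along $\varphi_n$.

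The key steps, in order, are as follows.
\begin{enumerate}[(i)]
\item Check that Eckhardt's challenge, or a modification of it, is actually asymptotically multiplicative in the operator norm and not only in $\|\cdot\|_2$. Ideally one would use honest permutation-type constructions whose defects are small in norm.
\item Suppose $W_n^*\pi_n W_n\approx\varphi_n$ in operator norm, with $k_n/d_n\to1$. Then the compression $P_n=W_nW_n^*$ almost commutes in norm with $\pi_n(g)$ for the generators.
\item Cut $P_n$ to a genuine projection $Q_n$. Using norm closeness and functional calculus on a suitable self-adjoint element of $C^*(\Gamma)$ with a spectral gap, show that $\pi_n$ restricted to the relevant spectral subspace has normalized dimension close to that of $\varphi_n$. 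The error terms are of order $(k_n-d_n)/d_n\to0$.
\item Derive a contradiction with the arithmetic constraint on genuine representations.
\end{enumerate}
Alternatively, and more cleanly, I would use the general observation that operator-norm closeness after compression implies Hilbert--Schmidt closeness, since $\|\cdot\|_2\le\|\cdot\|$ and the normalization changes only by $k_n/d_n\to1$. Then \emph{if} Eckhardt's witnessing challenge is already an operator-norm asymptotic representation, operator-norm flexible stability would yield Hilbert--Schmidt flexible correction for it, contradicting Eckhardt directly.

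The main obstacle is step (i): ensuring the non-flexibly-stable witness is asymptotically multiplicative in the operator norm, not merely in $\|\cdot\|_2$. I would first inspect Eckhardt's construction. If the challenge is built from genuine representations perturbed on a set of small normalized rank, I would try to replace it by a norm-small perturbation. One option is to spread the defect using amenability (F\o lner sets), turning a small-rank perturbation into a small-norm one. Another is to use the fact that Eckhardt's groups have finite quotients approximating a non-RFD-like behavior in norm. If that fails, I would fall back on directly proving steps (ii)--(iv) with the trace-rigidity invariant, taking its definition from Eckhardt's paper.
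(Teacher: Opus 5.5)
The very-flexible half of your proposal is exactly the paper's argument. The problem is the negative half, which the proposal never proves.

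\textbf{Where the proposal falls short.} Everything rests on step (i), and you leave it open as the ``main obstacle''. The fallback (ii)--(iv) is built on a guessed mechanism (arithmetic constraints on traces of spectral projections, $K$-theory). That is not the obstruction at work here, and the plan would not go through as stated.
\begin{enumerate}[(a)]
\item Step (ii) is fine.
\item If (iii) means replacing $P_n$ by a nearby projection that reduces $\pi_n$, this fails in general. Apply the paper's proof of Theorem~\ref{Tm:Z2_is_FS} to Voiculescu's pairs. It produces commuting $U_n,V_n$ and a corner projection that nearly commutes with both. No nearby projection can commute with them exactly, since otherwise the compressions $(u_n,v_n)$ would be close to a commuting pair, contradicting $w(u_n,v_n)\neq 0$.
\item There is no general F\o lner-type device that turns small-rank defects into small-norm ones, and none is needed.
\end{enumerate}

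\textbf{The missing idea: an elementary trace obstruction.} Eckhardt's group $\widetilde G_p$ has a normal subgroup $\Lambda$ for which $\tau=1_\Lambda$ is not a pointwise limit of finite-dimensional traces. The proof of \cite[Theorem~2.14]{Eckhardt25} already provides maps $\varphi_n\colon\Gamma\to\U_{d_n}$ that are asymptotically multiplicative \emph{in the operator norm}, with $\tau_{d_n}(\varphi_n(g))\to\tau(g)$. So step (i) is settled by the source itself, with no modification.

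After that you only need two estimates.
\begin{enumerate}[(a)]
\item For every $u\in\U_{k_n}$ and isometry $W_n$, $|\tau_{d_n}(W_n^*uW_n)-\tau_{k_n}(u)|\le 2(1-d_n/k_n)$. To see this, split $\Tr(u)$ along $P_n=W_nW_n^*$ and $1-P_n$, and bound each piece by the corresponding rank.
\item $|\tau_{d_n}(a)-\tau_{d_n}(b)|\le\|a-b\|$.
\end{enumerate}
Together these show that a flexible correction $\pi_n$ would satisfy $\tau_{k_n}(\pi_n(g))\to\tau(g)$, which contradicts the choice of $\Lambda$. The obstruction is therefore that $\tau$ lies outside the pointwise closure of finite-dimensional traces, and an $o(d_n)$ enlargement cannot change this. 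No spectral gaps or projection-cutting are involved.

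\textbf{On your Hilbert--Schmidt reduction.} The ``cleaner'' route is logically sound once this norm-multiplicative witness is available. However, group-level failure of flexible HS stability is not enough. You need this particular sequence to be HS-uncorrectable, and proving that is the same trace argument.
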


Together, our two main results show that both implications
\[
\text{stability}
\Longrightarrow
\text{flexible stability}
\Longrightarrow
\text{very-flexible stability}
\]
are strict in the operator norm.

Theorem \ref{mainTmC} will be proved in Section~\ref{Sec:VFSnonFS}, while Theorem \ref{mainTmA} will be proved in Section~\ref{Sec:Z2isFS}.
The main ingredient in its proof is a result proved independently by Gong--Lin \cite{GongLin98} and Eilers--Loring--Pedersen \cite{EilLorPed99}: almost commuting unitary matrices with vanishing winding-number obstruction can be approximated by commuting
unitary matrices. Our contribution is to show that this obstruction can always
be eliminated by adding only $o(d_n)$ dimensions.

%%%%%%%%%%%%

\section{$\mathbb Z^2$ is flexibly stable}\label{Sec:Z2isFS}

In this section, we show that $\mathbb Z^2$, or, equivalently, its full group C*-algebra $C^*(\mathbb Z^2)\cong C(\T^2)$, is flexibly stable.

\subsection{The invariant $w(u,v)$}
For $u,v \in \U_n$, consider the line-segment path from $uv$ to $vu$,
\[
[0,1] \to M_n,
\qquad
t \mapsto (1-t)uv+tvu .
\]
If each matrix along this path is invertible, then its determinant defines a closed curve in $\C\setminus\{0\}$, since the endpoints have the same determinant:
\[
\det(uv)=\det(vu).
\]

In this paper, we will only consider this path in the case where $u$ and $v$ almost commute. In particular, we may assume that $\|[u,v]\|<2$.
Thus,
\[
\|vuv^*u^*-1\|=
\|vu-uv\|=
\|[u,v]\|<2.
\]
Since $vuv^*u^*$ is unitary, this implies that $-1$ is not in its spectrum, namely $-1 \notin \sigma(vuv^*u^*)$.

Now, for every $\lambda\in\sigma(vuv^*u^*)\subseteq\T$, the line segment from $1$ to $\lambda$ avoids $0$, since $\lambda\neq -1$. Hence,
\[
(1-t)1+tvuv^*u^*
\]
is invertible for all $t\in[0,1]$, and therefore so is $(1-t)uv+tvu=((1-t)1+tvuv^*u^*)uv$. In particular, the closed curve
\[
t\mapsto \det((1-t)uv+tvu)
\]
does not vanish.

Now, recall that if $\gamma:[0,1]\to \C\setminus\{0\}$ is a closed path, then its {\em winding number} around the origin is given by
\begin{equation} \label{Eq:winding}
    \Wind(\gamma,0)
   :=\frac{1}{2\pi i}\int_\gamma \frac{dz}{z}
   =\frac{1}{2\pi i}\int_0^1 \frac{\gamma'(t)}{\gamma(t)}\,dt .
\end{equation}
In particular, $\Wind(\gamma,0)$ is invariant under homotopy in $\C\setminus\{0\}$.

The {\em winding number invariant} of two unitaries $u,v \in \U_n$ is given by
\[
w(u,v):=\Wind\big(\det((1-t)uv+tvu),0\big).
\]
%%%%

We will use the following elementary additivity property. If $u_j,v_j\in \U_{n_j}$, $j=1,2$, and the corresponding winding numbers are defined, then
\[
w(u_1\oplus u_2,v_1\oplus v_2)
=
w(u_1,v_1)+w(u_2,v_2).
\]
Indeed, the determinant of the direct-sum path is the product of the two determinant paths, and winding number is additive under products of closed curves in $\C\setminus\{0\}$, as can be seen from the second equality of \eqref{Eq:winding}.

%%%
The invariant $w(\cdot,\cdot)$ also admits the following useful logarithmic description. Let
\[
\log:\C\setminus(-\infty,0]\to\C
\]
denote the principal branch of the logarithm. If $\|[u,v]\|<2$, then
$-1$ is not in the spectrum of the normal matrix $vuv^*u^*$, and therefore $\log(vuv^*u^*)$ is well-defined by functional calculus. Moreover, the path
\[
[0,1]\to M_n,
\qquad
t\mapsto \exp\big(t\log(vuv^*u^*)\big)uv
\]
is homotopic in ${\rm GL}_n(\mathbb C)$ to the line-segment path
\[
t\mapsto (1-t)uv+tvu.
\]
Thus, their determinants are homotopic as curves in $\C\setminus\{0\}$, so that the two curves have the same winding number. See \cite[Lemma 3.1]{Exel93} for further details. 

Let
\[
\gamma(t):=\det\left(\exp\big(t\log(vuv^*u^*)\big)uv\right), \quad t\in[0,1].
\]
Since $\det(\exp(a))=\exp(\Tr(a))$ for every $a\in M_n$, we have
\[
\gamma(t)
=
\exp\left(t\Tr(\log(vuv^*u^*))\right)\det(u)\det(v).
\]
Therefore
\[
w(u,v)
  =\frac{1}{2\pi i}\Tr(\log(vuv^*u^*)).
\]
%See \cite[Lemma 3.1]{Exel93} for further details.
%%%

\begin{La}\label{La:Wind_over_dim_to_zero}
Let $u_n,v_n \in \U_{d_n}$ be unitaries such that
\[
\lim_{n \to \infty}\|[u_n,v_n]\|=0.
\]
Then $w(u_n,v_n)$ is defined for all sufficiently large $n$, and
\[
\lim_{n \to \infty} \frac{w(u_n,v_n)}{d_n}=0.
\]
\end{La}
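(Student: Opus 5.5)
We will use the logarithmic formula $w(u,v)=\frac{1}{2\pi i}\Tr(\log(vuv^*u^*))$ derived above, and bound the trace by $d_n$ times the operator norm of the logarithm.

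First, since $\|[u_n,v_n]\|\to 0$, there is $N$ such that $\|[u_n,v_n]\|<2$ for all $n\ge N$. By the discussion preceding the lemma, $-1\notin\sigma(v_nu_nv_n^*u_n^*)$ for such $n$. Hence the line-segment path is invertible, and $w(u_n,v_n)$ is defined. Moreover, for these $n$ the formula
\[
w(u_n,v_n)=\frac{1}{2\pi i}\Tr\bigl(\log(c_n)\bigr),\qquad c_n:=v_nu_nv_n^*u_n^*,
\]
holds.

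Next, $c_n$ is unitary and therefore normal. Its eigenvalues $\lambda_{n,1},\dots,\lambda_{n,d_n}$ lie on $\T$ and satisfy
\[
|\lambda_{n,j}-1|\le \|c_n-1\|=\|v_nu_n-u_nv_n\|=\|[u_n,v_n]\|=:\varepsilon_n .
\]
Writing $\lambda_{n,j}=e^{i\theta_{n,j}}$ with $\theta_{n,j}\in(-\pi,\pi)$, the principal logarithm gives $\log\lambda_{n,j}=i\theta_{n,j}$. The chord-length relation $|e^{i\theta}-1|=2|\sin(\theta/2)|$ then gives
\[
|\theta_{n,j}|=2\arcsin(|\lambda_{n,j}-1|/2)\le 2\arcsin(\varepsilon_n/2)=:\delta_n .
\]
The bound $\delta_n$ tends to $0$ as $\varepsilon_n\to 0$, for instance because $\delta_n\le \pi\varepsilon_n/2$.

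Finally, the trace of $\log(c_n)$ is the sum of its eigenvalues $i\theta_{n,j}$. Therefore
\[
|w(u_n,v_n)|=\frac{1}{2\pi}\Bigl|\sum_{j=1}^{d_n}\theta_{n,j}\Bigr|\le \frac{d_n\,\delta_n}{2\pi},
\]
and so $|w(u_n,v_n)|/d_n\le \delta_n/(2\pi)\to 0$.

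No step here is a real obstacle. The only point needing care is that the spectral bound on $\log$ uses the principal branch, so the arguments stay in $(-\pi,\pi)$. That in turn relies on the spectrum avoiding $-1$, which the first step established.
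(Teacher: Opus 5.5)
Your proof is correct and follows essentially the same route as the paper: apply $w(u,v)=\frac{1}{2\pi i}\Tr(\log(vuv^*u^*))$, then bound the trace by $d_n$ times the size of the eigenvalues of the logarithm. The only difference is cosmetic: the paper uses a Lipschitz constant $C$ with $|\log z|\le C|z-1|$ near $1$, whereas you get an explicit eigenvalue bound $2\arcsin(\varepsilon_n/2)\le \pi\varepsilon_n/2$ from the chord-length formula.
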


\begin{proof}
Since $\log$ is holomorphic on $\C\setminus(-\infty,0]$, it is in particular Lipschitz on any compact set in $\C\setminus(-\infty,0]$. Thus, there exists $C>0$ such that every $z \in \C$ with $|z-1| \leq \frac{1}{2}$ satisfies
\[
|\log(z)|\leq C |z-1|.
\]
As
\[
\|v_nu_nv_n^*u_n^* -1 \|=\|v_nu_n-u_nv_n\|=\left\|[u_n,v_n]\right\| \longrightarrow 0,
\]
for all sufficiently large $n$, we have
\[
\|\log(v_nu_nv_n^*u_n^*)\|\leq C \|v_nu_nv_n^*u_n^*-1\|= C \left\|[u_n,v_n]\right\|.
\]
Thus,
\[
\begin{split}
    |w(u_n,v_n)|&=\frac{1}{2\pi }\left|\Tr(\log(v_nu_nv_n^*u_n^*))\right| \\
    &\leq \frac{1}{2\pi } d_n \left\|\log(v_nu_nv_n^*u_n^*)\right\| \\
    &\leq \frac{C}{2\pi } d_n  \left\|[u_n,v_n]\right\|.
\end{split}
\]
Dividing by $d_n$ gives the required result.
\end{proof}

\subsection{Voiculescu's matrices}

Fix $n \geq 2$, and set $\zeta_n=e^{2\pi i/n}$. Let
\[
\Omega_n=\mathrm{diag}(1,\zeta_n,\ldots,\zeta_n^{n-1}),
\]
and let $S_n$ denote the $n\times n$ cyclic shift matrix. Then $S_n$ and $\Omega_n$ are unitaries satisfying
\[
S_n\Omega_n=\zeta_n\Omega_n S_n.
\]
Consequently,
\[
[S_n,\Omega_n]
   =S_n\Omega_n-\Omega_n S_n
   =(\zeta_n-1)\Omega_n S_n,
\]
and hence
\[
\|[S_n,\Omega_n]\|=|\zeta_n-1|\longrightarrow 0.
\]
Thus the pairs $(S_n,\Omega_n)$ form a sequence of almost commuting unitary matrices.

On the other hand, their winding-number obstruction is nontrivial:
\[
w(S_n,\Omega_n)=-1,
\qquad
w(\Omega_n,S_n)=1.
\]
This is the obstruction exploited by Voiculescu in his proof that $\mathbb Z^2$ is not stable \cite{Voiculescu83}. See also Exel--Loring \cite{ExelLoring89}, where the winding-number invariant is used explicitly.

More than a decade later, Gong--Lin \cite{GongLin98} and, independently, Eilers--Loring--Pedersen \cite{EilLorPed99} showed that this winding-number obstruction is the only obstruction.

\begin{Tm}[Gong--Lin \cite{GongLin98}, Eilers--Loring--Pedersen \cite{EilLorPed99}]\label{Tm:Pert_of_unitaries}
    If $u_n, v_n \in \U_{d_n}$ with $\lim_{n \to \infty}\|[u_n,v_n]\|=0$ and $w(u_n,v_n)=0$ for all sufficiently large $n$, then there are commuting unitaries $U_n,V_n \in \U_{d_n}$ such that
    \[
    \lim_{n\to \infty}\|U_n - u_n \| =0 \quad \text{and} \quad  \lim_{n\to \infty}\|V_n - v_n \| =0. 
    \]
\end{Tm}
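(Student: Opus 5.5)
The statement above is the Gong--Lin / Eilers--Loring--Pedersen theorem, which the paper imports rather than proves. The plan below is how I would reconstruct it: reduce to a lifting problem for a single $*$-homomorphism, and use the vanishing of the Bott/winding invariant to remove the only $K$-theoretic obstruction.

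\emph{Step 1: reduction to a homomorphism.} Suppose the conclusion fails. Passing to a subsequence, there is $\varepsilon>0$ such that every commuting pair $(U,V)$ in $\U_{d_n}$ satisfies $\max(\|U-u_n\|,\|V-v_n\|)\geq\varepsilon$. Let
\[
A=\prod_n M_{d_n}\Big/\bigoplus_n M_{d_n}.
\]
The images $\dot u,\dot v\in A$ are commuting unitaries, so they define a unital $*$-homomorphism $\phi\colon C(\T^2)\to A$. The goal is to lift $\phi$, modulo arbitrarily small errors, to commuting pairs in the individual matrix algebras $M_{d_n}$.

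\emph{Step 2: the $K$-theoretic data.} The group $K_0(C(\T^2))\cong\Z^2$ is generated by $[1]$ and the Bott element $\beta$. Its image $\phi_*(\beta)$ is represented, for large $n$, by the standard Bott projection built from $(u_n,v_n)$. By the logarithmic formula for $w$ established earlier, the rank defect of that projection is exactly $w(u_n,v_n)$. The hypothesis $w(u_n,v_n)=0$ therefore says that $\phi_*(\beta)=0$ in the strongest (sequence-level) sense. $K_1(C(\T^2))$ causes no obstruction, since $K_1(M_{d_n})=0$.

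\emph{Step 3: build a commuting model.} Next I would construct commuting unitaries $U_n',V_n'$, each a direct sum of scalar pairs $(\lambda,\mu)\in\T^2$, such that two properties hold:
\begin{itemize}
\item the joint ``spectral measure'' of $(U_n',V_n')$ matches that of $(u_n,v_n)$ at scale $\delta$ on a $\delta$-grid of $\T^2$, with counts measured by ranks of approximately invariant spectral projections;
\item $(U_n',V_n')$ has the same vanishing $K_0$-data as $(u_n,v_n)$.
\end{itemize}
To make the counts well defined, I would first perturb $u_n$ to have spectrum in a finite set of arcs with gaps. The matrix $v_n$ then almost commutes with the corresponding spectral projections. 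Compressing $v_n$ to each block gives a family of approximately commuting blocks, whose multiplicities determine the grid counts.

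\emph{Step 4: uniqueness theorem, the main obstacle.} The heart of the proof is a uniqueness theorem for maps $C(\T^2)\to M_d$. It should say: if two $\delta$-multiplicative maps induce the same $K_0$-data and have comparable spectral distributions on a fine enough grid, then they are approximately unitarily equivalent. Crucially, the tolerance must be independent of $d$. Granting this, some unitary $W_n$ satisfies
\[
\|W_n^*U_n'W_n-u_n\|\to 0\quad\text{and}\quad\|W_n^*V_n'W_n-v_n\|\to 0,
\]
which contradicts Step~1. For the uniqueness step, I would follow Eilers--Loring--Pedersen: cut $\T^2$ along the arcs from Step~3 to reduce to almost commuting self-adjoint-like data on cylinders. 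On a cylinder, Lin's theorem on almost commuting Hermitian matrices applies, and there the winding number is the sole gluing obstruction when the cut is reglued. Since that obstruction vanishes by hypothesis, the local commuting approximations can be patched into a global one. I expect the dimension-free control of the patching errors to be the hardest step.
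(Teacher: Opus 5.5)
The paper gives no proof of this statement (it is quoted from Gong--Lin and Eilers--Loring--Pedersen), so your sketch has to stand on its own. It has a genuine gap in Step~3.

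The sentence ``first perturb $u_n$ to have spectrum in a finite set of arcs with gaps; $v_n$ then almost commutes with the corresponding spectral projections'' is not a preliminary normalization. It is essentially the whole theorem.

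\begin{itemize}
\item \emph{What the step really requires.} Rounding eigenvalues to a $\delta$-grid changes the commutator by an amount of order $\delta$, the same scale as the gaps. So the resulting spectral projections need not almost commute with $v_n$. What you actually need is $u_n'$ with $\|u_n'-u_n\|\to 0$, spectrum avoiding arcs of some fixed length $\eta>0$, and $\|[u_n',v_n]\|\to 0$.

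\item \emph{Such $u_n'$ force $w(u_n,v_n)=0$.} Take a smooth $f\colon\T\to\mathbb{R}$ with $e^{if(z)}=z$ off one gap, and set $h_n=f(u_n')$. Then $u_n'=e^{ih_n}$ and $\|[h_n,v_n]\|\le C_\eta\|[u_n',v_n]\|\to 0$. The unitaries $X_t=e^{i((1-t)h_n+tv_nh_nv_n^*)}e^{-ih_n}$ run from $1$ to $v_nu_n'v_n^*(u_n')^*$, with $\|X_t-1\|\le\|[v_n,h_n]\|$ and $\det X_t=e^{it\Tr(v_nh_nv_n^*-h_n)}=1$. Hence $\Tr\log X_t\in 2\pi i\Z$ is constant, so $w(u_n',v_n)=0$, and $w(u_n,v_n)=0$ by homotopy invariance.

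\item \emph{Consequence.} Your normalization is impossible for Voiculescu's pairs $(S_m,\Omega_m)$. Step~3 never says how the hypothesis $w(u_n,v_n)=0$ would produce it.

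\item \emph{Conversely, it already gives the conclusion.} Suppose such $u_n'$ exist with arcs of length at most $\delta$. Let $P_j$ be the spectral projections of $u_n'$ and $\lambda_j$ points of the arcs. Then $\sum_j\lambda_jP_j$ and the unitary part of $\sum_jP_jv_nP_j$ commute and lie within $\delta+o(1)$ of $(u_n,v_n)$. A diagonal argument in $\delta$ finishes, with no need for Step~4.
\end{itemize}

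So Step~3 assumes a statement equivalent to the conclusion. How the vanishing of $w$ lets one overcome the absence of dimension-free spectral gaps is precisely what Gong--Lin and Eilers--Loring--Pedersen prove.

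This also shows that Step~4 puts the obstruction in the wrong place. After cutting along spectral gaps, the pieces are orthogonal direct summands, so nothing needs to be reglued. The winding number therefore cannot arise as a gluing obstruction; it is the obstruction to being able to cut at all.

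Separately, the uniqueness theorem you grant is too vague to check: ``comparable spectral distributions on a grid'' was only defined through the gap normalization. A dimension-free version that also covers pairs whose spectral distribution concentrates near proper subsets of $\T^2$ is the technical core of Gong--Lin. Granting it leaves the substance of the proof unaddressed.

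A smaller point: the fact that the rank defect of the Bott projection equals $w(u_n,v_n)$ is a theorem of Exel and Loring. It does not follow from the logarithmic formula for $w$.
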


%%%%%%%%%%

\subsection{The main theorem}
\begin{Tm}[$\mathbb Z^2$ is flexibly stable]\label{Tm:Z2_is_FS}
    Let $d_n$ be a sequence of positive integers, and let $u_n,v_n \in \U_{d_n}$ be unitaries with
    \[
    \lim_{n\to \infty}\|[u_n,v_n]\|=0.
    \]
    Then there are $k_n \in \mathbb N$ and isometries $W_n:\C^{d_n}\to \C^{k_n}$ such that
    \[
    \lim_{n\to \infty} \frac{k_n}{d_n}=1,
    \]
    and commuting unitaries $U_n,V_n \in \U_{k_n}$ such that
    \[
       \lim_{n \to \infty} \|W_n^*U_nW_n - u_n\|=0 \quad \text{and} \quad 
         \lim_{n \to \infty} \|W_n^*V_nW_n - v_n\|=0. 
    \]
\end{Tm}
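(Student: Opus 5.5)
The plan is to cancel the winding-number obstruction by adjoining a direct summand of small dimension that carries the opposite winding number, and then to apply Theorem \ref{Tm:Pert_of_unitaries} to the enlarged pair.

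Set $m_n := w(u_n,v_n)$, which is defined for large $n$. By Lemma \ref{La:Wind_over_dim_to_zero}, $|m_n|/d_n \to 0$. To cancel $m_n$ we need a pair of almost commuting unitaries of total dimension $o(d_n)$ whose winding number is $-m_n$.

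\textbf{Building the correcting summand.} Voiculescu's pairs provide the building blocks: $w(S_N,\Omega_N)=-1$ and $w(\Omega_N,S_N)=1$, while $\|[S_N,\Omega_N]\| = |\zeta_N - 1| \to 0$ as $N\to\infty$. Choose block sizes $N_n \to \infty$ such that $|m_n| N_n = o(d_n)$. This is possible because $|m_n|/d_n \to 0$: for example, take
\[
N_n = \min\Bigl(\bigl\lfloor (d_n/\max(|m_n|,1))^{1/2} \bigr\rfloor,\, n\Bigr),
\]
adjusted so that $N_n \geq 2$. Then $N_n \to \infty$ whenever $m_n \neq 0$, and when $m_n = 0$ no correction is needed.

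If $m_n > 0$, let $a_n$ and $b_n$ be the direct sums of $m_n$ copies of $S_{N_n}$ and $\Omega_{N_n}$, respectively. If $m_n<0$, use $\Omega_{N_n}$ and $S_{N_n}$ instead. By additivity of $w$, we get $w(a_n,b_n) = -m_n$. The commutator norm is computed blockwise and equals $|\zeta_{N_n}-1|$.

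\textbf{Applying the Gong--Lin/Eilers--Loring--Pedersen theorem.} Set $k_n = d_n + |m_n| N_n$, so that $k_n/d_n \to 1$. Put $\tilde u_n = u_n \oplus a_n$ and $\tilde v_n = v_n \oplus b_n$. Then
\[
\|[\tilde u_n,\tilde v_n]\| = \max\bigl(\|[u_n,v_n]\|,\, \|[a_n,b_n]\|\bigr) \to 0,
\]
and by additivity $w(\tilde u_n,\tilde v_n) = m_n - m_n = 0$. (For the finitely many small $n$ where $w$ is undefined, take anything; this does not affect the limits.) Theorem \ref{Tm:Pert_of_unitaries} now yields commuting unitaries $U_n,V_n\in\U_{k_n}$ with $\|U_n-\tilde u_n\|\to0$ and $\|V_n - \tilde v_n\| \to 0$.

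\textbf{Compressing back to $\C^{d_n}$.} Let $W_n$ be the inclusion of $\C^{d_n}$ as the first summand. Then $W_n^*\tilde u_nW_n = u_n$, and since compression by an isometry does not increase the operator norm,
\[
\|W_n^*U_nW_n - u_n\| \leq \|U_n-\tilde u_n\| \to 0.
\]
The same bound holds for $v_n$.

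\textbf{Main obstacle.} The only delicate point is choosing $N_n$ so that two requirements hold at once: $N_n\to\infty$, which makes the correcting blocks almost commute, and $|m_n|N_n = o(d_n)$, which keeps the enlargement negligible. Lemma \ref{La:Wind_over_dim_to_zero} is exactly what makes this balance possible. Some care is also needed along subsequences where $m_n=0$ or $d_n$ stays bounded. If $d_n$ is bounded, then $m_n = 0$ eventually, because $|m_n|\le \frac{C}{2\pi}d_n\|[u_n,v_n]\|\to0$ and $m_n$ is an integer, so no correction is needed there.
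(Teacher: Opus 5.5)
Your proposal is correct and follows the paper's proof essentially verbatim: you cancel $w(u_n,v_n)$ by appending $|w(u_n,v_n)|$ copies of Voiculescu's pair $(S_{N_n},\Omega_{N_n})$ or $(\Omega_{N_n},S_{N_n})$, with $N_n\to\infty$ and $|w(u_n,v_n)|\,N_n=o(d_n)$ (possible by Lemma \ref{La:Wind_over_dim_to_zero}), apply Theorem \ref{Tm:Pert_of_unitaries} to the enlarged pair, and compress via the canonical embedding. The only difference is that you give an explicit choice of block size, which the paper leaves implicit.
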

%%%
\begin{proof}
    Set $w_n:=w(u_n,v_n)$ for all sufficiently large $n$, and define $w_n:=0$ for the remaining finitely many $n$. By Lemma \ref{La:Wind_over_dim_to_zero},
    \[
    \lim_{n \to \infty}\frac{w_n}{d_n}= 0.
    \]
    Choose a sequence of positive integers $m_n\to\infty$ such that
    \[
     \lim_{n \to \infty}\frac{w_n m_n}{d_n}= 0.
    \]
    Set
    \[
    k_n:=d_n+|w_n|m_n.
    \]

    If $w_n\geq 0$, define
    \[
    X_n:=u_n\oplus S_{m_n}^{\oplus |w_n|},
    \qquad
    Y_n:=v_n\oplus \Omega_{m_n}^{\oplus |w_n|}.
    \]
    Otherwise, if $w_n<0$, define
    \[
    X_n:=u_n\oplus \Omega_{m_n}^{\oplus |w_n|},
    \qquad
    Y_n:=v_n\oplus S_{m_n}^{\oplus |w_n|}.
    \]
    Then $X_n,Y_n\in \U_{k_n}$.

    By additivity of the winding number under direct sums, and since
    \[
    w(S_{m_n},\Omega_{m_n})=-1,
    \qquad
    w(\Omega_{m_n},S_{m_n})=1,
    \]
    we have
    \[
    w(X_n,Y_n)=0
    \]
    for all sufficiently large $n$.

    Moreover,
    \[
    \begin{split}
    \|[X_n,Y_n]\|
    &\leq
    \max\{\|[u_n,v_n]\|,\|[S_{m_n},\Omega_{m_n}]\|\}\\
     &=
    \max\{\|[u_n,v_n]\|,|\zeta_{m_n}-1|\}\longrightarrow 0.
   \end{split}
    \]
    Therefore, by Theorem \ref{Tm:Pert_of_unitaries}, there are commuting unitaries
    $U_n,V_n\in \U_{k_n}$ such that
    \[
     \lim_{n \to \infty}\|U_n-X_n\|=0,
    \qquad
     \lim_{n \to \infty}\|V_n-Y_n\|=0.
    \]

    Let $W_n:\C^{d_n}\to\C^{k_n}$ be the canonical isometric embedding onto the first $d_n$ coordinates. Then
    \[
     \lim_{n \to \infty}\|W_n^*U_nW_n-u_n\|=0,
    \qquad
     \lim_{n \to \infty}\|W_n^*V_nW_n-v_n\|=0.
    \]
    Finally,
    \[
    \frac{k_n}{d_n}
    =
    1+\frac{|w_n|m_n}{d_n}\to 1.
    \]
\end{proof}

For amenable groups, stability in the normalized Hilbert–Schmidt norm admits a trace-theoretic characterization due to Shulman and Hadwin \cite{ShulmanHadwin18b}. Using this characterization, Eckhardt and Shulman showed that, in this setting, allowing flexible enlargements of the matrix size does not change the notion of stability.

\begin{Tm}[Eckhardt and Shulman, Proposition 6.2 of \cite{EckhardtShulman23}]
Let $\Gamma$ be amenable. Then $\Gamma$ is flexibly stable in the normalized Hilbert–Schmidt norm if and only if it is stable in the normalized Hilbert–Schmidt norm.
\end{Tm}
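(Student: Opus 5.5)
The implication ``stable $\Rightarrow$ flexibly stable'' is immediate: take $k_n=d_n$ and $W_n=1$. So the content lies in the converse. I would prove it through the trace-theoretic characterization of Hadwin and Shulman \cite{ShulmanHadwin18b}, which we take as given. It says that an amenable group $\Gamma$ is stable in the normalized Hilbert--Schmidt norm if and only if every tracial state $\tau$ on $C^*(\Gamma)$ is a pointwise weak$^*$-limit of tracial states of the form $\operatorname{tr}_{k}\circ\pi$, where $\pi\colon\Gamma\to\U_k$ is a genuine finite-dimensional representation. Assuming flexible stability, the plan is to verify this approximation property for a fixed trace $\tau$.

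\textbf{Step 1: realize $\tau$ by an asymptotic representation.} Since $\Gamma$ is amenable, the von Neumann algebra $\pi_\tau(\Gamma)''$ generated in the GNS representation of $\tau$ is hyperfinite, by Connes' theorem. It therefore embeds trace-preservingly into an ultrapower $R^\omega$ of the hyperfinite II$_1$ factor, and hence into a matrix ultraproduct $\prod_\omega M_{d_n}$. Lifting the images of the group elements coordinatewise, and replacing each lift by the unitary in its polar decomposition, should give maps $\varphi_n\colon\Gamma\to\U_{d_n}$ with two properties:
\[
\|\varphi_n(gh)-\varphi_n(g)\varphi_n(h)\|_2\to0
\qquad\text{and}\qquad
\operatorname{tr}_{d_n}(\varphi_n(g))\to\tau(g)
\]
for every $g,h\in\Gamma$, at least along a subsequence; a diagonal argument over a countable enumeration of $\Gamma$ handles this. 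I expect this step to be the main technical point, namely making the ultraproduct-to-sequence passage and the unitarization precise. It is standard, however, and it is the only place where amenability enters beyond the Hadwin--Shulman criterion.

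\textbf{Step 2: apply flexible stability and compare traces.} Flexible stability gives $k_n$ with $k_n/d_n\to1$, isometries $W_n$, and representations $\pi_n\colon\Gamma\to\U_{k_n}$ such that $\|W_n^*\pi_n(g)W_n-\varphi_n(g)\|_2\to0$. Set $P_n=W_nW_n^*$. Splitting the trace of $\pi_n(g)$ along $P_n$ and $1-P_n$ gives
\[
\Bigl|\operatorname{tr}_{k_n}(\pi_n(g))-\tfrac{d_n}{k_n}\operatorname{tr}_{d_n}(W_n^*\pi_n(g)W_n)\Bigr|\le\frac{k_n-d_n}{k_n}\to0,
\]
because the compression of a unitary to the complement has operator norm at most $1$ and rank $k_n-d_n$. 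We also have
\[
|\operatorname{tr}_{d_n}(W_n^*\pi_n(g)W_n)-\operatorname{tr}_{d_n}(\varphi_n(g))|\le\|W_n^*\pi_n(g)W_n-\varphi_n(g)\|_2\to0,
\]
by the Cauchy--Schwarz inequality. Combining the two estimates with $d_n/k_n\to1$ and Step 1 yields
\[
\operatorname{tr}_{k_n}(\pi_n(g))\to\tau(g)\qquad\text{for every }g\in\Gamma.
\]

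\textbf{Step 3: conclude.} Every trace on $C^*(\Gamma)$ is determined by its values on $\Gamma$, and the traces involved are uniformly bounded. So the convergence in Step 2 is weak$^*$ convergence, and $\tau$ is a limit of traces of finite-dimensional representations. Since $\tau$ was arbitrary, the Hadwin--Shulman criterion shows that $\Gamma$ is stable in the normalized Hilbert--Schmidt norm.

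The essential feature is that a $o(d_n)$ enlargement perturbs normalized traces by only $o(1)$. This explains why flexibility gains nothing in the Hilbert--Schmidt setting for amenable groups, in contrast with Theorem \ref{Tm:Z2_is_FS} for the operator norm.
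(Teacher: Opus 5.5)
Your proof is correct and follows the route the paper points to. The paper does not reprove this result; it attributes it to Eckhardt--Shulman via the Hadwin--Shulman trace criterion. Your Step 2 is exactly the mechanism of Proposition \ref{Pn:FS=>Traces_to_0} and Lemma \ref{La:FS=>TraceStable}, with the operator-norm bound $|\tau_{d_n}(a)-\tau_{d_n}(b)|\le\|a-b\|$ replaced by its Cauchy--Schwarz analogue for $\|\cdot\|_2$. Your Step 1, which uses amenability and Connes' theorem to realize an arbitrary trace by an asymptotic representation, plays the role that Eckhardt's explicit construction plays in the proof of Theorem \ref{Tm:VFS_not_FS}.
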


The operator norm behaves differently. Indeed, our result on two almost commuting unitaries shows that the distinction between stability and flexible stability is already visible for the simplest non-cyclic abelian group.

\begin{Cy}\label{Cy:FS_is_not_S}
The group $\mathbb Z^2$ is flexibly stable, but not stable, in the operator norm. Consequently, flexible stability and stability are not equivalent in the operator norm, even for abelian groups.
\end{Cy}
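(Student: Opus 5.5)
The corollary has two halves: $\mathbb Z^2$ is flexibly stable, and $\mathbb Z^2$ is not stable.

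\textbf{Flexible stability.} I would take this directly from Theorem \ref{Tm:Z2_is_FS}. First translate an asymptotic representation $\varphi_n\colon\mathbb Z^2\to\U_{d_n}$ into a pair of almost commuting unitaries. Set $u_n=\varphi_n(e_1)$ and $v_n=\varphi_n(e_2)$. The multiplicativity defect at the pairs $(e_1,e_2)$ and $(e_2,e_1)$ tends to zero, so $\|[u_n,v_n]\|\to 0$. Theorem \ref{Tm:Z2_is_FS} then supplies $k_n/d_n\to1$, isometries $W_n$, and commuting unitaries $U_n,V_n$. These define representations $\pi_n(a,b)=U_n^aV_n^b$.

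It remains to check $\|W_n^*\pi_n(g)W_n-\varphi_n(g)\|\to0$ for every $g\in\mathbb Z^2$, not only for the generators. This is the only genuine step in this half. In the proof of Theorem \ref{Tm:Z2_is_FS}, $U_n$ is close to $X_n=u_n\oplus(\cdot)$, where $W_n$ is the coordinate embedding. So $W_n^*U_n^aV_n^bW_n$ is within $o(1)$ of $W_n^*X_n^aY_n^bW_n=u_n^av_n^b$, by a telescoping estimate for fixed $a,b$. In turn $u_n^av_n^b$ is within $o(1)$ of $\varphi_n(a,b)$, by iterating the asymptotic multiplicativity finitely many times. Negative powers are handled using $\varphi_n(-g)\approx\varphi_n(g)^*$, which follows from $\varphi_n(0)\approx 1$.

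If one prefers to use the theorem only as stated, without the block structure of $X_n$, argue as follows. Since $W_n^*U_nW_n$ is close to the unitary $u_n$, $W_n$ is almost $U_n$-invariant: $\|(1-W_nW_n^*)U_nW_n\|\to0$, because a compression of a unitary that is close to a unitary forces the off-diagonal block to be small. The same holds for $U_n^*$, $V_n$, and $V_n^*$, so compressions of words are asymptotically multiplicative.

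\textbf{Non-stability.} This is Voiculescu's theorem \cite{Voiculescu83}, via the pairs $(S_n,\Omega_n)$. I would cite it, or argue as follows. Suppose commuting $U,V$ satisfied $\|U-S_n\|,\|V-\Omega_n\|<\varepsilon$. Then the straight-line homotopy keeps commutators below $2$, so $w(S_n,\Omega_n)=w(U,V)$. But $w(U,V)=0$ because $[U,V]=0$, contradicting $w(S_n,\Omega_n)=-1$.

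The delicate point is the local constancy of $w$. Using the formula $w(u,v)=\frac{1}{2\pi i}\Tr\log(vuv^*u^*)$, it is continuous and integer valued on $\{\|[u,v]\|<2\}$, so it is constant along a path staying in that set. The path stays in that set provided $\varepsilon$ is small compared to $2-\|[S_n,\Omega_n]\|$, which is uniformly bounded below. Hence $(S_n,\Omega_n)$ is a challenge with no nearby genuine representations, so $\mathbb Z^2$ is not stable.

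The final sentence follows immediately, since $\mathbb Z^2$ is abelian.
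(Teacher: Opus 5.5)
Your proposal is correct and follows the paper's route. Flexible stability comes from Theorem~\ref{Tm:Z2_is_FS}; your passage from the generators to all of $\mathbb Z^2$, via the block structure of $X_n,Y_n$ or via almost-invariance of $W_n$, fills in a step the paper leaves implicit. Non-stability is Voiculescu's obstruction $w(S_n,\Omega_n)=-1\neq 0=w(U,V)$, which the paper simply cites.

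One small repair is needed in your optional sketch. The straight line from $S_n$ to $U$ leaves $\U_n$, and there your $\Tr\log(vuv^*u^*)$ argument and the bound $\|[u,v]\|<2$ do not directly apply. Use instead the unitary path $t\mapsto S_n\exp(t\log(S_n^*U))$, which stays $\varepsilon$-close to $S_n$, and do the same for $\Omega_n$ and $V$.
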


\subsection{Remarks} We end this section with a warning about terminology and with a natural question.
\begin{Wg}\em
There are several inequivalent ways to ``stabilize'' matricial stability, and the terminology in the literature is not completely uniform. In particular, Dadarlat's {\em weak matricial stability} \cite{Dadarlat21} is a direct-sum stabilization property: roughly speaking, one asks whether an asymptotic representation $\varphi_n$ can be made close to a genuine representation after {\em adding another genuine finite-dimensional representation}. Thus, the comparison is of the form
\[
\varphi_n \oplus \sigma_n \approx \pi_n,
\]
where $\sigma_n$ and $\pi_n$ are genuine representations. In some related formulations, the added summand is required to be trivial or is viewed as an auxiliary direct-sum correction. This is different from the notion of flexible stability used in this paper, where the approximation is by compression from a slightly larger representation, rather than by adding a direct summand to the almost-representation.

This distinction is important. Dadarlat's cohomological obstructions apply to ordinary matricial stability and, under suitable hypotheses, also to certain direct-sum stabilized variants. They should not be interpreted as obstructions to flexible stability in the sense used here. Indeed, $\mathbb Z^2$ has nontrivial second homology, and hence is obstructed from being stable in the operator norm; nevertheless, the main result of this paper shows that $\mathbb Z^2$ is flexibly stable in the operator norm.
\end{Wg}

\begin{Qn}\em
Is $\mathbb Z^3$ flexibly stable in the operator norm, or is it only very-flexibly stable? If it is not flexibly stable, is the required dimension increase nevertheless linear in the original dimension?

More precisely, let $A$ be a unital $C^*$-algebra. Given a unital $*$-homomorphism
\[
\varphi:A\to \prod_n M_{d_n}\big/ \bigoplus_n M_{d_n},
\]
define $\Delta(\varphi)$ to be the infimum of
\[
\limsup_{n\to\infty}\frac{k_n}{d_n}
\]
over all choices of integers $k_n\geq d_n$, isometries
$W_n:\mathbb C^{d_n}\to \mathbb C^{k_n}$, and unital $*$-homomorphisms
\[
\widetilde\varphi:A\to \prod_n M_{k_n}
\]
such that 
\[
\varphi(a)
=
\pi\left(
\bigl(W_n^*\widetilde\varphi_n(a)W_n\bigr)_{n\in\mathbb N}
\right)
\qquad
\text{for every } a\in A,
\]
where $\pi$ denotes the quotient map $\prod_n M_{d_n}\to \prod_n M_{d_n}\big/ \bigoplus_n M_{d_n}$.
(If no such triple $(\widetilde\varphi,W_n,k_n)$ exists, we set
$\Delta(\varphi)=+\infty$.) Finally, define
\[
D(A)
:=
\sup_{\varphi}
\Delta(\varphi),
\]
where the supremum is taken over all unital $*$-homomorphisms
\[
\varphi:A\to \prod_n M_{d_n}\big/ \bigoplus_n M_{d_n}
\]
and over all sequences $(d_n)$.

With this terminology, flexible stability of a unital C*-algebra $A$ is equivalent % by a standard diagonal argument, 
to the assertion that $D(A)=1$. In particular, the main result of this paper says that
$D(C^*(\mathbb Z^2))=D(C(\mathbb T^2))=1$. What is $D(C^*(\mathbb Z^3))$?

More generally, can one estimate $D(C^*(\Gamma))$ for amenable groups $\Gamma$, or at least for abelian groups?
\end{Qn}

%%%%%%%%%%%%%%%%%
\section{Very-flexibly stable groups that are not flexibly stable}
\label{Sec:VFSnonFS}

The following theorem is the operator-norm counterpart of a result appearing in Eckhardt's work for the normalized Hilbert--Schmidt norm. In fact, the result can be extracted from the proof of \cite[Theorem~2.14]{Eckhardt25}, together with the discussion of flexible Hilbert--Schmidt stability in \cite[Section~6]{EckhardtShulman23}. For completeness, we give the short argument.

Throughout this section, $\tau_n$ denotes the normalized trace on $M_n$. A \emph{finite-dimensional trace} on a group $\Gamma$ is a function of the form
\[
g\mapsto \tau_n(\pi(g)),
\]
where $\pi\colon\Gamma\to\U_n$ is a finite-dimensional unitary representation. Convergence of traces is understood pointwise on $\Gamma$.

\begin{Tm}\label{Tm:VFS_not_FS}
Let $\Gamma$ be an amenable group, and suppose that it has a normal subgroup
$\Lambda\triangleleft\Gamma$ such that the trace
\[
\tau(g)=
\begin{cases}
1,&g\in\Lambda,\\
0,&g\notin\Lambda,
\end{cases}
\]
is not a pointwise limit of finite-dimensional traces. Then $\Gamma$ is not
flexibly stable in the operator norm.
\end{Tm}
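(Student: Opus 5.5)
Argue by contradiction: assuming $\Gamma$ is flexibly stable in the operator norm, we will show that $\tau$ is a pointwise limit of finite-dimensional traces.

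\textbf{Step 1: an asymptotic representation realizing $\tau$.} Since $\Gamma$ is amenable, so is $\Gamma/\Lambda$. Choose a F{\o}lner sequence $F_n\subset\Gamma/\Lambda$, set $d_n=|F_n|$, and define $\varphi_n\colon\Gamma\to\U_{d_n}$ by truncating the quasi-regular representation of $\Gamma/\Lambda$ to $\ell^2(F_n)$. Concretely, $g$ maps $\delta_x$ to $\delta_{gx}$ when $gx\in F_n$, and on the remaining basis vectors we complete this partial permutation to a bijection of $F_n$. This yields permutation matrices $\varphi_n(g)$ with the following properties:
\begin{itemize}
\item $\varphi_n(g)$ fixes every basis vector when $g\in\Lambda$, since $\Lambda$ acts trivially on $\Gamma/\Lambda$;
\item $\varphi_n(g)$ moves all but $o(d_n)$ basis vectors when $g\notin\Lambda$;
\item the defect $\varphi_n(gh)-\varphi_n(g)\varphi_n(h)$ is supported on $o(d_n)$ coordinates.
\end{itemize}
This is only a Hilbert--Schmidt asymptotic representation, not an operator-norm one. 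In the operator norm the defect has norm up to $2$ on a small set.

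\textbf{Step 2: fixing the operator-norm defect.} This is the main obstacle. Two approaches are available.

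\emph{(a) Smoothing.} Replace the sharp truncation by a smooth cutoff, in the spirit of Eckhardt's argument in \cite[Theorem~2.14]{Eckhardt25}. One uses that amenability gives quasi-diagonality-type approximations of the quasi-regular representation. Equivalently, $C^*(\Gamma/\Lambda)=C^*_r(\Gamma/\Lambda)$ is quasidiagonal in the relevant sense, and $\tau$ is an amenable trace on $C^*(\Gamma)$, factoring through $\lambda_{\Gamma/\Lambda}$. The goal is $\varphi_n$ that are operator-norm asymptotically multiplicative and satisfy $\tau_{d_n}(\varphi_n(g))\to\tau(g)$. Pure Følner truncation gives only $\|\cdot\|_2$-control, so I would first try this smooth version.

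\emph{(b) Bypass via dilation.} Alternatively, use that very-flexible stability holds, so $\varphi_n$ dilates to genuine representations. This bypass needs care and may not apply as stated.

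Either way, the aim is an operator-norm asymptotic representation $\varphi_n$ with $\tau_{d_n}(\varphi_n(g))\to\tau(g)$ pointwise.

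\textbf{Step 3: compressing and comparing traces.} Apply flexible stability to the $\varphi_n$ from Step 2. This gives $k_n/d_n\to1$, isometries $W_n$, and representations $\pi_n\colon\Gamma\to\U_{k_n}$ with $\|W_n^*\pi_n(g)W_n-\varphi_n(g)\|\to0$. Set $P_n=W_nW_n^*$, so that $1-P_n$ has rank $k_n-d_n=o(k_n)$. Then
\[
\tau_{k_n}(\pi_n(g))=\frac{1}{k_n}\Tr\bigl(W_n^*\pi_n(g)W_n\bigr)+\frac{1}{k_n}\Tr\bigl((1-P_n)\pi_n(g)(1-P_n)\bigr).
\]
The second term is bounded by $(k_n-d_n)/k_n\to0$. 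The first term equals
\[
\frac{d_n}{k_n}\tau_{d_n}\bigl(W_n^*\pi_n(g)W_n\bigr)=\frac{d_n}{k_n}\tau_{d_n}(\varphi_n(g))+o(1),
\]
because normalized traces are controlled by operator norms. Hence $\tau_{k_n}(\pi_n(g))\to\tau(g)$ pointwise. This exhibits $\tau$ as a limit of finite-dimensional traces, contradicting the hypothesis.

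Note that ratio $k_n/d_n\to1$ is exactly what Step 3 uses. Without it, the extra block would dilute the trace, and no contradiction would follow.
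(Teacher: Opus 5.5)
Your overall architecture matches the paper's. Step~3 is exactly Proposition~\ref{Pn:FS=>Traces_to_0} combined with Lemma~\ref{La:FS=>TraceStable}, and your computation there is correct: the orthogonal complement of the range of $W_n$ costs at most $(k_n-d_n)/k_n$ in normalized trace, and $|\tau_{d_n}(a)-\tau_{d_n}(b)|\le\|a-b\|$.

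\textbf{The gap is Step~2.} You correctly flag it as the crux but leave it unresolved, and it is the only non-formal input in the argument. You need maps $\varphi_n\colon\Gamma\to\U_{d_n}$ that are asymptotically multiplicative in the \emph{operator norm} and satisfy $\tau_{d_n}(\varphi_n(g))\to\tau(g)$. Neither route you sketch supplies them.

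Route~(a) does not work as described:
\begin{itemize}
\item Amenability of $\tau$ is precisely the existence of $\|\cdot\|_2$-approximations. It is not ``equivalent'' to the norm version.
\item Smoothing the F{\o}lner cutoff does not help. Norm control amounts to compressing some representation by finite-rank projections that almost commute with it in norm. A coordinate projection $P_F$ on $\ell^2(\Gamma/\Lambda)$ has $\|[P_F,\lambda(g)]\|=1$ whenever $gF\neq F$, and a smooth cutoff is not a projection at all.
\item What is actually needed is that the canonical trace of $C^*(\Gamma/\Lambda)$ is a \emph{quasidiagonal} trace. Quasidiagonality of the algebra alone is not enough, since it must be this particular trace. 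For a general amenable quotient this is a deep theorem (Rosenberg's conjecture, proved by Tikuisis--White--Winter; earlier Ozawa--R\o rdam--Sato for elementary amenable groups). It is not a consequence of F{\o}lner sets.
\end{itemize}

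Route~(b) is circular. Very-flexible stability, which does follow from your contradiction hypothesis, applies only to operator-norm asymptotic representations. That is exactly what Step~2 is supposed to produce, and your Step~1 maps are only Hilbert--Schmidt asymptotic representations.

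The paper does not construct these maps either; it imports them from the proof of \cite[Theorem~2.14]{Eckhardt25}. To complete your argument, either cite that, or argue directly as follows.
\begin{itemize}
\item $C^*_r(\Gamma/\Lambda)=C^*(\Gamma/\Lambda)$ is separable, nuclear and in the UCT class, so its faithful canonical trace is quasidiagonal by Tikuisis--White--Winter.
\item This gives u.c.p.\ maps $\psi_n\colon C^*(\Gamma/\Lambda)\to M_{d_n}$ that are asymptotically multiplicative in norm, with $\tau_{d_n}\circ\psi_n$ converging to the canonical trace.
\item Let $\varphi_n(g)$ be the unitary part of the polar decomposition of $\psi_n(g\Lambda)$. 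It is $o(1)$-close to $\psi_n(g\Lambda)$ in norm, so $\varphi_n$ is an operator-norm asymptotic representation with $\tau_{d_n}(\varphi_n(g))\to\tau(g)$.
\end{itemize}
With that input in place, your Step~3 finishes the proof.
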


We first record an elementary estimate.

\begin{Pn}\label{Pn:FS=>Traces_to_0}
Let $k_n\geq d_n$, let
$W_n\colon\C^{d_n}\to\C^{k_n}$ be isometries, and suppose that
\[
\lim_{n \to \infty} \frac{k_n}{d_n} = 1.
\]
Then, for every sequence $u_n\in\U_{k_n}$,
\[
\lim_{n \to \infty} \left( \tau_{d_n}(W_n^*u_nW_n)-\tau_{k_n}(u_n) \right) = 0.
\]
\end{Pn}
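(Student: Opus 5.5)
The plan is to write the unnormalized traces explicitly and compare them through the projection $P_n:=W_nW_n^*$. Since $W_n$ is an isometry, $P_n$ is the orthogonal projection onto $W_n\C^{d_n}$, which has rank $d_n$. By cyclicity of the trace,
\[
\Tr(W_n^*u_nW_n)=\Tr(u_nW_nW_n^*)=\Tr(P_nu_nP_n).
\]
Hence
\[
\tau_{d_n}(W_n^*u_nW_n)=\frac{1}{d_n}\Tr(P_nu_nP_n),
\qquad
\tau_{k_n}(u_n)=\frac{1}{k_n}\Tr(u_n).
\]

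The first step is to bound the difference between $\Tr(u_n)$ and $\Tr(P_nu_nP_n)$. Write
\[
\Tr(u_n)-\Tr(P_nu_nP_n)=\Tr\bigl((1-P_n)u_n(1-P_n)\bigr),
\]
which holds because the cross terms $\Tr(P_nu_n(1-P_n))$ and $\Tr((1-P_n)u_nP_n)$ vanish by cyclicity. The operator $(1-P_n)u_n(1-P_n)$ has rank at most $k_n-d_n$ and operator norm at most $1$, so
\[
|\Tr(u_n)-\Tr(P_nu_nP_n)|\le k_n-d_n.
\]

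The second step is to split the difference of normalized traces as
\[
\frac{1}{d_n}\Tr(P_nu_nP_n)-\frac{1}{k_n}\Tr(u_n)
=\Bigl(\frac{1}{d_n}-\frac{1}{k_n}\Bigr)\Tr(P_nu_nP_n)+\frac{1}{k_n}\bigl(\Tr(P_nu_nP_n)-\Tr(u_n)\bigr).
\]
For the first term, $|\Tr(P_nu_nP_n)|\le d_n$, so its absolute value is at most $d_n\bigl(\tfrac{1}{d_n}-\tfrac{1}{k_n}\bigr)=1-\tfrac{d_n}{k_n}$. For the second term, the bound from the first step gives at most $\tfrac{k_n-d_n}{k_n}=1-\tfrac{d_n}{k_n}$. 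Altogether,
\[
\bigl|\tau_{d_n}(W_n^*u_nW_n)-\tau_{k_n}(u_n)\bigr|\le 2\Bigl(1-\frac{d_n}{k_n}\Bigr),
\]
and the right-hand side tends to $0$ because $k_n/d_n\to1$.

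There is no real obstacle here. The only point needing care is the rank and norm estimate for the complementary corner $(1-P_n)u_n(1-P_n)$, which uses only that $u_n$ is a contraction. Unitarity of $u_n$ is not needed beyond this.
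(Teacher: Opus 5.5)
Your proof is correct and is essentially the paper's argument: the same projection $P_n=W_nW_n^*$, the same splitting into a $\bigl(\frac{1}{d_n}-\frac{1}{k_n}\bigr)$ term plus a complementary-rank term, and the same final bound $2\bigl(1-\frac{d_n}{k_n}\bigr)$. The only cosmetic difference is that you use the corners $P_nu_nP_n$ and $(1-P_n)u_n(1-P_n)$ where the paper uses $P_nu_n$ and $(1-P_n)u_n$; by cyclicity these have the same traces.
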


\begin{proof}
    Set $P_n:=W_nW_n^*$. Then
    \[
    \tau_{k_n}(u_n)
    =\frac{1}{k_n}\Tr(u_n)
    =\frac{1}{k_n}\Tr(P_nu_n)+\frac{1}{k_n}\Tr((1-P_n)u_n),
    \]
    and also
    \[
    \tau_{d_n}(W_n^*u_nW_n)=  \frac{1}{d_n}\Tr(P_n u_n).
    \]
    Therefore,
    \[
    \begin{split}
        \tau_{d_n}(W_n^*u_n W_n) - \tau_{k_n}(u_n)
        &=\left(\frac{1}{d_n}-\frac{1}{k_n}\right)\Tr(P_n u_n) - \frac{1}{k_n}\Tr((1-P_n)u_n).
    \end{split}
    \]
    Since $P_n$ is an orthogonal projection of rank $d_n$, if $(e_i)_{i=1}^{d_n}$ is an orthonormal basis for its range, then
\[
\left|\Tr(P_nu_n)\right|
=
\left|\sum_{i=1}^{d_n}\langle u_ne_i,e_i\rangle\right|
\leq
\sum_{i=1}^{d_n}\|u_ne_i\|\,\|e_i\|
=
d_n,
\]
because $u_n$ is unitary. Similarly, since $1-P_n$ has rank $k_n-d_n$,
\[
\left|\Tr((1-P_n)u_n)\right|\leq k_n-d_n.
\]
    % Since $u_n$ is unitary
    % \[
    % |\Tr(P_n u_n)| \leq d_n, \qquad |\Tr((1-P_n)u_n)| \leq k_n - d_n.
    % \]
    Thus
\[
    \begin{split}
       \left| \tau_{d_n}(W_n^*u_n W_n) - \tau_{k_n}(u_n) \right|
        & \leq \left(\frac{1}{d_n}-\frac{1}{k_n}\right)d_n + \frac{1}{k_n}(k_n-d_n)\\
        &=2\left(1-\frac{d_n}{k_n} \right) \longrightarrow 0.
    \end{split}
    \]
\end{proof}

\begin{La}\label{La:FS=>TraceStable}
Suppose that $\Gamma$ is flexibly stable in the operator norm and that
$\varphi_n\colon\Gamma\to\U_{d_n}$ satisfies
\[
\lim_{n \to \infty} \|\varphi_n(gh)-\varphi_n(g)\varphi_n(h)\| = 0
\qquad \text{for all } g,h\in\Gamma.
\]
Then there exist integers $k_n\geq d_n$ with $\lim_{n \to \infty} k_n/d_n = 1$ and
representations $\pi_n\colon\Gamma\to\U_{k_n}$ such that
\[
\lim_{n \to \infty} \left( \tau_{d_n}(\varphi_n(g))-\tau_{k_n}(\pi_n(g)) \right) = 0
\qquad \text{for all } g\in\Gamma.
\]
\end{La}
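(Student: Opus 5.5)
The plan is to apply flexible stability directly to $(\varphi_n)$ and then compare traces in two steps: first pass from $\varphi_n(g)$ to the compression $W_n^*\pi_n(g)W_n$, then from the compression to $\pi_n(g)$ itself using Proposition~\ref{Pn:FS=>Traces_to_0}.

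\textbf{Step 1: invoke flexible stability.} By the definition of flexible stability in the operator norm, there are integers $k_n\geq d_n$ with $k_n/d_n\to 1$, isometries $W_n\colon\C^{d_n}\to\C^{k_n}$, and representations $\pi_n\colon\Gamma\to\U_{k_n}$ such that
\[
\|W_n^*\pi_n(g)W_n-\varphi_n(g)\|\longrightarrow 0 \qquad\text{for every } g\in\Gamma.
\]
The integers $k_n$ and representations $\pi_n$ will be the ones in the conclusion.

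\textbf{Step 2: compare with the compression.} Fix $g\in\Gamma$. We split
\[
\tau_{d_n}(\varphi_n(g))-\tau_{k_n}(\pi_n(g))
=\bigl(\tau_{d_n}(\varphi_n(g))-\tau_{d_n}(W_n^*\pi_n(g)W_n)\bigr)
+\bigl(\tau_{d_n}(W_n^*\pi_n(g)W_n)-\tau_{k_n}(\pi_n(g))\bigr).
\]
For the first bracket we use that the normalized trace is dominated by the operator norm: for any $a\in M_{d_n}$,
\[
|\tau_{d_n}(a)|\leq \frac{1}{d_n}\sum_{i=1}^{d_n}|\langle a e_i,e_i\rangle|\leq \|a\|.
\]
Applying this to $a=\varphi_n(g)-W_n^*\pi_n(g)W_n$ shows the first bracket tends to $0$ by Step 1.

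\textbf{Step 3: compare the compression with $\pi_n$.} The second bracket tends to $0$ by Proposition~\ref{Pn:FS=>Traces_to_0}, applied to the unitaries $u_n:=\pi_n(g)\in\U_{k_n}$. This uses that $k_n/d_n\to1$ and that the $W_n$ are isometries. Both terms vanish in the limit, so the difference of traces tends to $0$ for each $g$.

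There is essentially no obstacle. The one point needing care is that the normalized trace is $1$-Lipschitz with respect to the operator norm. That fact is immediate here because the normalization is by $d_n$, which is exactly the dimension in which the compression and $\varphi_n(g)$ both live.
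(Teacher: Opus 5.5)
Your proof is correct and follows the paper's argument essentially step for step. You apply flexible stability and split the trace difference through the compression $W_n^*\pi_n(g)W_n$; one term is controlled by $|\tau_{d_n}(a)|\leq\|a\|$ and the other by Proposition~\ref{Pn:FS=>Traces_to_0}.
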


\begin{proof}
By flexible stability, there are isometries
$W_n\colon\C^{d_n}\to\C^{k_n}$ and representations
$\pi_n\colon\Gamma\to\U_{k_n}$ such that
\[
\lim_{n \to \infty} \|W_n^*\pi_n(g)W_n-\varphi_n(g)\| = 0
\qquad \text{for all } g\in\Gamma.
\]
Consequently, for any $g\in \Gamma$,
    \[
    \begin{split}
     |\tau_{k_n}&(\pi_n(g))-\tau_{d_n}(\varphi_n(g))|\\
    &\leq \left |\tau_{k_n}(\pi_n(g))-\tau_{d_n}(W_n^*\pi_n(g)W_n)\right|+\left |\tau_{d_n}(W_n^*\pi_n(g)W_n)-\tau_{d_n}(\varphi_n(g))\right|\\
     &\leq \left |\tau_{k_n}(\pi_n(g))-\tau_{d_n}(W_n^*\pi_n(g)W_n)\right|+\|W_n^*\pi_n(g)W_n-\varphi_n(g)\|\longrightarrow 0,
    \end{split}
    \]
    where the first summand tends to zero by Proposition \ref{Pn:FS=>Traces_to_0}.
\end{proof}

\begin{proof}[Proof of Theorem~\ref{Tm:VFS_not_FS}]
The proof of \cite[Theorem~2.14]{Eckhardt25} provides maps
$\varphi_n\colon\Gamma\to\U_{d_n}$ such that
\[
\lim_{n \to \infty} \|\varphi_n(gh)-\varphi_n(g)\varphi_n(h)\| = 0
\qquad \text{for all } g,h\in\Gamma,
\]
and
\[
\lim_{n \to \infty} \tau_{d_n}(\varphi_n(g)) = \tau(g)
\qquad \text{for all } g\in\Gamma.
\]
Assume, toward a contradiction, that $\Gamma$ is flexibly stable in the operator norm.
Then, by Lemma \ref{La:FS=>TraceStable}, there are $k_n \geq d_n$ and finite-dimensional representations $\pi_n: \Gamma \to \U_{k_n}$ such that
\[
    \lim_{n \to \infty} \left( \tau_{d_n}(\varphi_n(g)) - \tau_{k_n}(\pi_n(g))\right) =0 \quad \text{for all } g\in \Gamma.
\] 
Thus,
\[
\lim_{n \to \infty} \tau_{k_n}\left(\pi_n(g) \right) = \tau(g) \quad \text{for all $g \in \Gamma$},
\]
which is a contradiction.
\end{proof}

For every prime $p$, let $\widetilde G_p$ be the subgroup of upper triangular matrices in
$GL_5(\mathbb Z[\tfrac{1}{p}])$ given by
\[
\widetilde G_p
:=
\left\{
\begin{bmatrix}
1&\ast&\ast&\ast&\ast\\
0&p^n&\ast&\ast&\ast\\
0&0&p^k&\ast&\ast\\
0&0&0&1&m\\
0&0&0&0&1
\end{bmatrix}
:
n,k,m\in\mathbb Z
\right\},
\]
where each $\ast$ denotes an arbitrary element of
$\mathbb Z[\tfrac{1}{p}]$. Eckhardt shows that $\widetilde G_p$ is finitely
presented, residually finite, and solvable, and that it admits a normal
subgroup satisfying the assumptions of
Theorem~\ref{Tm:VFS_not_FS}.

\begin{Cy}\label{Cy:VFS_is_not_FS}
For every prime $p$, the group $\widetilde G_p \leq GL_5(\mathbb Z[\tfrac{1}{p}])$ is very-flexibly stable but
not flexibly stable in the operator norm.
\end{Cy}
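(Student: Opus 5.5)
The corollary has two halves, and I would prove them separately using results already stated.

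\emph{Not flexibly stable.} This half should follow directly from Theorem~\ref{Tm:VFS_not_FS}. I would first record that $\widetilde G_p$ is solvable, hence amenable. Eckhardt's work then supplies a normal subgroup $\Lambda\triangleleft\widetilde G_p$ whose indicator trace $\tau=\mathbf 1_\Lambda$ is not a pointwise limit of finite-dimensional traces. With both hypotheses of Theorem~\ref{Tm:VFS_not_FS} in place, $\widetilde G_p$ is not flexibly stable in the operator norm.

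The only point needing care is to quote the correct property from \cite{Eckhardt25}. Eckhardt's obstruction to Hilbert--Schmidt stability is phrased exactly as this failure of approximability of $\mathbf 1_\Lambda$. Moreover, the proof of Theorem~\ref{Tm:VFS_not_FS} already draws the operator-norm asymptotic representations with traces tending to $\tau$ from the proof of \cite[Theorem~2.14]{Eckhardt25}. So no additional input is needed.

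\emph{Very-flexibly stable.} For this half I would use the general criterion recalled in the introduction. Since $\widetilde G_p$ is amenable, $C^*(\widetilde G_p)$ is nuclear, so it has the lifting property and in particular the local lifting property. Since $\widetilde G_p$ is amenable and residually finite (Eckhardt), $C^*(\widetilde G_p)$ is residually finite-dimensional. The justification is that the finite-quotient representations separate points of $C^*_r(\widetilde G_p)=C^*(\widetilde G_p)$, which is Bekka's theorem for amenable residually finite groups.

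Separability holds because $\widetilde G_p$ is finitely generated, indeed finitely presented. The theorem of Fournier-Facio--Willett \cite{FournierFacioWillett26} therefore applies to every normalized unitarily invariant norm, in particular the operator norm, and gives very-flexible stability.

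\emph{Expected obstacle.} The step that requires the most care is the residual finite-dimensionality of $C^*(\widetilde G_p)$. Residual finiteness of the group alone is not sufficient for this in general. Here amenability is what closes the gap: it identifies the full group C*-algebra with the reduced one, and for amenable groups the regular representation is weakly contained in the finite-quotient representations. I would cite this standard fact explicitly, as the introduction implicitly does, and then combine the two halves to conclude.
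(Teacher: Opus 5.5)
Your proposal is correct and follows the paper's proof: failure of flexible stability comes from Theorem~\ref{Tm:VFS_not_FS}, using solvability for amenability and Eckhardt's normal subgroup; very-flexible stability comes from feeding nuclearity and residual finite-dimensionality of $C^*(\widetilde G_p)$ into the theorem of Fournier-Facio--Willett. You justify more than the paper does (Choi--Effros for the lifting property, Bekka for residual finite-dimensionality, separability), all of it fine; note only that the regular representation is weakly contained in the finite-quotient representations for every residually finite group, so amenability is needed only to identify $C^*(\widetilde G_p)$ with $C^*_r(\widetilde G_p)$.
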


\begin{proof}
By Theorem~\ref{Tm:VFS_not_FS}, the group $\widetilde G_p$ is not flexibly stable in the operator norm. On the other hand, since $\widetilde G_p$ is finitely generated, residually finite, and amenable, its full group C*-algebra is residually finite-dimensional and nuclear, and therefore has the lifting property. It follows from \cite{FournierFacioWillett26} that $\widetilde G_p$ is very-flexibly stable in the operator norm.
\end{proof}

%%%%%%%%%%%%%%%%%%%%%%%%%%%%%%%%
\bibliographystyle{abbrv}
\bibliography{refs}  

@article {Dadarlat21,
    AUTHOR = {Dadarlat, Marius},
     TITLE = {Obstructions to matricial stability of discrete groups and
              almost flat {K}-theory},
   JOURNAL = {Adv. Math.},
  FJOURNAL = {Advances in Mathematics},
    VOLUME = {384},
      YEAR = {2021},
     PAGES = {Paper No. 107722, 29},
      ISSN = {0001-8708,1090-2082},
   MRCLASS = {20F65 (19J05 19K35 20E26 46L80)},
  MRNUMBER = {4238917},
       DOI = {10.1016/j.aim.2021.107722},
       URL = {https://doi-org.ezproxy.weizmann.ac.il/10.1016/j.aim.2021.107722},
}

@article {BowenBurton20,
    AUTHOR = {Bowen, Lewis and Burton, Peter},
     TITLE = {Flexible stability and nonsoficity},
   JOURNAL = {Trans. Amer. Math. Soc.},
  FJOURNAL = {Transactions of the American Mathematical Society},
    VOLUME = {373},
      YEAR = {2020},
    NUMBER = {6},
     PAGES = {4469--4481},
      ISSN = {0002-9947,1088-6850},
   MRCLASS = {20F69 (20G20 20G40)},
  MRNUMBER = {4105530},
MRREVIEWER = {Tushar\ Das},
       DOI = {10.1090/tran/8047},
       URL = {https://doi-org.ezproxy.weizmann.ac.il/10.1090/tran/8047},
}

@article {ChapmanLubotzky25I,
    AUTHOR = {Chapman, Michael and Lubotzky, Alexander},
     TITLE = {Stability of homomorphisms, coverings and cocycles {I}:
              equivalence},
   JOURNAL = {Eur. J. Math.},
  FJOURNAL = {European Journal of Mathematics},
    VOLUME = {11},
      YEAR = {2025},
    NUMBER = {1},
     PAGES = {Paper No. 13, 38},
      ISSN = {2199-675X,2199-6768},
   MRCLASS = {20F65},
  MRNUMBER = {4859639},
MRREVIEWER = {Peter\ Ha\"{\i}ssinsky},
       DOI = {10.1007/s40879-024-00803-8},
       URL = {https://doi-org.ezproxy.weizmann.ac.il/10.1007/s40879-024-00803-8},
}

@article {ChapmanLubotzky25II,
    AUTHOR = {Chapman, Michael and Lubotzky, Alexander},
     TITLE = {Stability of homomorphisms, coverings and cocycles {II}:
              examples, applications and open problems},
   JOURNAL = {Adv. Math.},
  FJOURNAL = {Advances in Mathematics},
    VOLUME = {463},
      YEAR = {2025},
     PAGES = {Paper No. 110117, 38},
      ISSN = {0001-8708,1090-2082},
   MRCLASS = {20F65 (05E45 20B99 20E99)},
  MRNUMBER = {4860041},
MRREVIEWER = {Peter\ Ha\"{\i}ssinsky},
       DOI = {10.1016/j.aim.2025.110117},
       URL = {https://doi-org.ezproxy.weizmann.ac.il/10.1016/j.aim.2025.110117},
}

@article {Ioana20,
    AUTHOR = {Ioana, Adrian},
     TITLE = {Stability for product groups and property ({$\tau$})},
   JOURNAL = {J. Funct. Anal.},
  FJOURNAL = {Journal of Functional Analysis},
    VOLUME = {279},
      YEAR = {2020},
    NUMBER = {9},
     PAGES = {108729, 32},
      ISSN = {0022-1236,1096-0783},
   MRCLASS = {20B99 (20E22 20F69 22A05)},
  MRNUMBER = {4134896},
MRREVIEWER = {Alain\ Valette},
       DOI = {10.1016/j.jfa.2020.108729},
       URL = {https://doi-org.ezproxy.weizmann.ac.il/10.1016/j.jfa.2020.108729},
}

@article {Dogon23,
    AUTHOR = {Dogon, Alon},
     TITLE = {Flexible {H}ilbert-{S}chmidt stability versus hyperlinearity
              for property ({T}) groups},
   JOURNAL = {Math. Z.},
  FJOURNAL = {Mathematische Zeitschrift},
    VOLUME = {305},
      YEAR = {2023},
    NUMBER = {4},
     PAGES = {Paper No. 58, 20},
      ISSN = {0025-5874,1432-1823},
   MRCLASS = {20P05 (20E26 20J06 22D10 22D25 22D55)},
  MRNUMBER = {4660281},
MRREVIEWER = {Paul\ Jolissaint},
       DOI = {10.1007/s00209-023-03387-3},
       URL = {https://doi-org.ezproxy.weizmann.ac.il/10.1007/s00209-023-03387-3},
}

@article {LazarovichLevitMinsky25,
    AUTHOR = {Lazarovich, Nir and Levit, Arie and Minsky, Yair},
     TITLE = {Surface groups are flexibly stable},
   JOURNAL = {J. Eur. Math. Soc. (JEMS)},
  FJOURNAL = {Journal of the European Mathematical Society (JEMS)},
    VOLUME = {27},
      YEAR = {2025},
    NUMBER = {4},
     PAGES = {1739--1768},
      ISSN = {1435-9855,1435-9863},
   MRCLASS = {20B07 (15A24 20F67 20F69 57K20)},
  MRNUMBER = {4875616},
MRREVIEWER = {V.\ V.\ Benyash-Krivets},
       DOI = {10.4171/jems/1406},
       URL = {https://doi-org.ezproxy.weizmann.ac.il/10.4171/jems/1406},
}

@article {BeckerLubotzky20,
    AUTHOR = {Becker, Oren and Lubotzky, Alexander},
     TITLE = {Group stability and {P}roperty ({T})},
   JOURNAL = {J. Funct. Anal.},
  FJOURNAL = {Journal of Functional Analysis},
    VOLUME = {278},
      YEAR = {2020},
    NUMBER = {1},
     PAGES = {108298, 20},
      ISSN = {0022-1236,1096-0783},
   MRCLASS = {20F69 (22E99)},
  MRNUMBER = {4027744},
MRREVIEWER = {John\ M.\ Mackay},
       DOI = {10.1016/j.jfa.2019.108298},
       URL = {https://doi-org.ezproxy.weizmann.ac.il/10.1016/j.jfa.2019.108298},
}

@article {ShulmanHadwin18b,
    AUTHOR = {Hadwin, Don and Shulman, Tatiana},
     TITLE = {Stability of group relations under small {H}ilbert-{S}chmidt
              perturbations},
   JOURNAL = {J. Funct. Anal.},
  FJOURNAL = {Journal of Functional Analysis},
    VOLUME = {275},
      YEAR = {2018},
    NUMBER = {4},
     PAGES = {761--792},
      ISSN = {0022-1236,1096-0783},
   MRCLASS = {20F05 (46L10)},
  MRNUMBER = {3807776},
MRREVIEWER = {Zhuang\ Niu},
       DOI = {10.1016/j.jfa.2018.05.006},
       URL = {https://doi-org.ezproxy.weizmann.ac.il/10.1016/j.jfa.2018.05.006},
}

@article{FournierFacioWillett26,
      AUTHOR = {Fournier-Facio, Francesco and Willett, Rufus},
      TITLE  = {The Local Lifting Property, Property FD, and stability of approximate representations},
      YEAR   = {2026},
      JOURNAL = {arXiv:2603.18456},
      URL ={https://arxiv.org/abs/2603.18456v2},
}

@article{DorOnHallKachkovskiy25,
      AUTHOR = {Dor-On, Adam and Hall, Lucas and Kachkovskiy, Ilya},
      TITLE  = {On almost commuting unitary matrices},
      YEAR   = {2025},
      JOURNAL = {arXiv:2510.03674},
      URL ={https://arxiv.org/abs/2510.03674},
}

@article{GerholdShalit24,
      AUTHOR = {Gerhold, Malte and Shalit,  Orr},
      TITLE  = {Dilation distance and the stability of ergodic commutation relations},
      YEAR   = {2024},
      JOURNAL = {arXiv:2406.05864},
      URL ={https://arxiv.org/abs/2406.05864},
}

@article{Eckhardt25,
      AUTHOR = {Eckhardt, Caleb},
      TITLE  = {Residually finite amenable groups that are not {H}ilbert-{S}chmidt stable},
      YEAR   = {2025},
      JOURNAL = {arXiv:2501.07791},
      URL ={https://arxiv.org/pdf/2501.07791},
}

@article {EckhardtShulman23,
    AUTHOR = {Eckhardt, Caleb and Shulman, Tatiana},
     TITLE = {On amenable {H}ilbert-{S}chmidt stable groups},
   JOURNAL = {J. Funct. Anal.},
  FJOURNAL = {Journal of Functional Analysis},
    VOLUME = {285},
      YEAR = {2023},
    NUMBER = {3},
     PAGES = {Paper No. 109954, 31},
      ISSN = {0022-1236,1096-0783},
   MRCLASS = {20F18 (20E22 46L35)},
  MRNUMBER = {4579918},
MRREVIEWER = {Paul\ Jolissaint},
       DOI = {10.1016/j.jfa.2023.109954},
       URL = {https://doi-org.ezproxy.weizmann.ac.il/10.1016/j.jfa.2023.109954},
}

@article {GongLin98,
    AUTHOR = {Gong, Guihua and Lin, Huaxin},
     TITLE = {Almost multiplicative morphisms and almost commuting matrices},
   JOURNAL = {J. Operator Theory},
  FJOURNAL = {Journal of Operator Theory},
    VOLUME = {40},
      YEAR = {1998},
    NUMBER = {2},
     PAGES = {217--275},
      ISSN = {0379-4024,1841-7744},
   MRCLASS = {46L05 (46L35 46L80)},
  MRNUMBER = {1660385},
MRREVIEWER = {E.\ St\o rmer},
}

@article {EilLorPed99,
    AUTHOR = {Eilers, S\o ren and Loring, Terry A. and Pedersen, Gert K.},
     TITLE = {Morphisms of extensions of {$C^*$}-algebras: pushing forward
              the {B}usby invariant},
   JOURNAL = {Adv. Math.},
  FJOURNAL = {Advances in Mathematics},
    VOLUME = {147},
      YEAR = {1999},
    NUMBER = {1},
     PAGES = {74--109},
      ISSN = {0001-8708,1090-2082},
   MRCLASS = {46L05 (46L80 46M15)},
  MRNUMBER = {1725815},
MRREVIEWER = {Vladimir\ Manuilov},
       DOI = {10.1006/aima.1999.1834},
       URL = {https://doi-org.ezproxy.weizmann.ac.il/10.1006/aima.1999.1834},
}

@article {Voiculescu83,
    AUTHOR = {Voiculescu, Dan},
     TITLE = {Asymptotically commuting finite rank unitary operators without
              commuting approximants},
   JOURNAL = {Acta Sci. Math. (Szeged)},
  FJOURNAL = {Acta Universitatis Szegediensis. Acta Scientiarum
              Mathematicarum},
    VOLUME = {45},
      YEAR = {1983},
    NUMBER = {1-4},
     PAGES = {429--431},
      ISSN = {0001-6969},
   MRCLASS = {47B44},
  MRNUMBER = {708811},
}

@article {ExelLoring89,
    AUTHOR = {Exel, Ruy and Loring, Terry},
     TITLE = {Almost commuting unitary matrices},
   JOURNAL = {Proc. Amer. Math. Soc.},
  FJOURNAL = {Proceedings of the American Mathematical Society},
    VOLUME = {106},
      YEAR = {1989},
    NUMBER = {4},
     PAGES = {913--915},
      ISSN = {0002-9939,1088-6826},
   MRCLASS = {15A15 (15A27 47A55 47B47 55M25)},
  MRNUMBER = {975641},
MRREVIEWER = {Kenneth\ R.\ Davidson},
       DOI = {10.2307/2047274},
       URL = {https://doi-org.ezproxy.weizmann.ac.il/10.2307/2047274},
}

@article {Exel93,
    AUTHOR = {Exel, Ruy},
     TITLE = {The soft torus and applications to almost commuting matrices},
   JOURNAL = {Pacific J. Math.},
  FJOURNAL = {Pacific Journal of Mathematics},
    VOLUME = {160},
      YEAR = {1993},
    NUMBER = {2},
     PAGES = {207--217},
      ISSN = {0030-8730,1945-5844},
   MRCLASS = {46L80 (46L05)},
  MRNUMBER = {1233352},
MRREVIEWER = {Carla\ Farsi},
       URL = {http://projecteuclid.org.ezproxy.weizmann.ac.il/euclid.pjm/1102624214},
}

@article {Question,
    AUTHOR = {Paulsen, Vern I. and Rahaman, Mizanur},
     TITLE = {Bisynchronous games and factorizable maps},
   JOURNAL = {Ann. Henri Poincar\'{e}},
  FJOURNAL = {Annales Henri Poincar\'{e}. A Journal of Theoretical and
              Mathematical Physics},
    VOLUME = {22},
      YEAR = {2021},
    NUMBER = {2},
     PAGES = {593--614},
      ISSN = {1424-0637,1424-0661},
   MRCLASS = {91A44 (05C57 81R50 91A43 91A81)},
  MRNUMBER = {4205237},
MRREVIEWER = {Ivan\ G.\ Todorov},
       DOI = {10.1007/s00023-020-01003-2},
       URL = {https://doi-org.ezproxy.weizmann.ac.il/10.1007/s00023-020-01003-2},
}
\end{document}